
\documentclass[final]{siamltex}

\usepackage{epsfig}
\usepackage{amsmath,amssymb}
\usepackage{graphicx}
\usepackage{epstopdf}
\usepackage{amsmath,amssymb}

\newtheorem{rem}{Remark}[section]
\newtheorem{algorithm}{Algorithm}

\title{Inexact Krylov Subspace Algorithms for Large Matrix Exponential Eigenproblem from Dimensionality Reduction
}


\author{Gang Wu\thanks{Corresponding author. G. Wu is with Department of Mathematics, China University of Mining and
Technology \& School of Mathematics and Statistics, Jiangsu Normal University, Xuzhou, 221116, Jiangsu, P.R. China. E-mail: {\tt gangwu76@126.com} and {\tt
wugangzy@gmail.com}. This author is
supported by the National Science Foundation of China under grant
11371176, the Natural Science Foundation of Jiangsu Province under
grant BK20131126, the 333 Project of Jiangsu Province, and the
Talent Introduction Program of China University of Mining and
Technology.}
\and Ting-ting Feng\thanks{Department of Mathematics, Shanghai Key Laboratory of Pure Mathematics and Mathematical Practice, East China Normal University, Dongchuan RD 500, Shanghai, 200241,  P.R. China.
E-mail: {\tt tofengtingting@163.com}.}
\and Li-jia Zhang\thanks{School of Mathematics and statistics, Jiangsu Normal University, Xuzhou, 221116, Jiangsu, P.R.
China. E-mail: {\tt zhanglijia86@163.com}.}
\and Meng Yang\thanks{School of Computer Science and Technology, Soochow University, Suzhou, 215006, Jiangsu, P.R. China. E-mail: {\tt eyangmeng@163.com}.}
}

\begin{document}
\maketitle

\begin{abstract}
Matrix exponential discriminant analysis (EDA) is a generalized discriminant analysis
method based on matrix exponential. It can essentially
overcome the intrinsic difficulty of small sample size problem that exists in the classical
linear discriminant analysis (LDA). However, for data with high dimension, one has to solve a large matrix exponential eigenproblem in this method,
and the time complexity is dominated by
the computation of exponential of large matrices. In this paper, we propose two inexact Krylov subspace algorithms for solving
the large matrix exponential eigenproblem effectively. The contribution of this work is threefold. First, we consider how to compute matrix exponential-vector products efficiently, which is the key step in the Krylov subspace method. Second, we compare the discriminant analysis criterion of EDA and that of LDA from a theoretical point of view. Third, we establish a relationship between the accuracy of the approximate eigenvectors and the distance to nearest neighbour classifier, and show why the matrix exponential eigenproblem can be solved approximately in practice.
Numerical experiments on some real-world databases show superiority of our new algorithms over many state-of-the-art algorithms for face recognition.
\end{abstract}

\begin{keywords}
Large matrix exponential eigenproblem, Krylov subspace method, Dimensionality reduction, Face recognition, Linear discriminant analysis (LDA), Exponential discriminant analysis (EDA).
\end{keywords}

\begin{AMS}
65F15, 65F10.
\end{AMS}

\pagestyle{myheadings}
\thispagestyle{plain}
\pagestyle{myheadings} \thispagestyle{plain} \markboth{G. WU, T. FENG, L. ZHANG AND M. YANG}{INEXACT KRYLOV ALGORITHMS FOR MATRIX EXPONENTIAL EIGENPROBLEM}
{\section{Introduction}\label{sec1}}
\setcounter{equation}{0}

Face recognition has become one of the most successful applications of image analysis.
Real data for face images are
usually depicted in high dimensions.
In order to handle
high dimensional data, their dimensionality needs to be
reduced. In essence,
dimensionality reduction is the transformation of
high-dimensional data into a lower dimensional data space.
Currently, one of the most extensively used dimensionality reduction
methods is subspace transformation \cite{DHS,Wang,XWang,WXL}.

Linear
discriminant analysis (LDA) is one of notable subspace transformation
methods for dimensionality reduction \cite{DHS,RAF,PP}.
LDA encodes discriminant
information by maximizing the between-class scatter,
and meanwhile minimizing the within-class scatter in
the projected subspace. More precisely, let $\mathcal{X}=[\chi_{1},\chi_{2},\ldots,\chi_{n}]$ be a set of training samples in a $d$-dimensional feature space, and
assume that
the original data in $\mathcal{X}$ is partitioned into $k$ classes as
$\mathcal{X}=[\mathcal{X}_1,\mathcal{X}_2,\ldots,\mathcal{X}_k]$.
We denote by $n_{j}$ the number of samples in the $j$-th class, and thus $\sum_{j=1}^{k}n_{j}=n$. Let ${\bf \mu}_{j}$ be the centroid of the $j$-th class, and ${\bf \mu}$ be the global centroid of the training data set.
If we denote ${\bf e}_{j}=[1,1,\ldots,1]^{T} \in \mathbb{R}^{n_{j}}$, then the within-class scatter matrix is defined as
\begin{equation}\label{eqn1}
S_{W}=\sum_{j=1}^{k}\sum_{\chi_{i}\in \mathcal{X}_{j}}(\chi_{i}-{\bf \mu}_{j})(\chi_{i}-{\bf \mu}_{j})^{T}=H_WH_W^T,
\end{equation}
where
$H_{W}=[\mathcal{X}_{1}-\mu_{1}\cdot{\bf e}_{1}^{T},\ldots,\mathcal{X}_{k}-\mu_{k}\cdot{\bf e}_{k}^{T}]\in\mathbb{R}^{d\times n}$.
The between-class scatter matrix is defined as
\begin{equation}\label{eqn2}
S_{B}=\sum_{j=1}^{k}n_{j}({\bf \mu}_{j}-{\bf \mu})({\bf \mu}_{j}-{\bf \mu})^{T}=H_BH_B^T,
\end{equation}
where
$H_B=[\sqrt{n_{1}}({\bf \mu}_{1}-{\bf \mu}),\sqrt{n_{2}}({\bf \mu}_{2}-{\bf \mu}),\ldots,\sqrt{n_{k}}({\bf \mu}_{k}-{\bf \mu})]\in\mathbb{R}^{d \times k}$.
The LDA method is realized by maximizing the between-class scatter distance while minimizing the within-class scatter distance, which involves
solving a ``Trace Ratio" problem \cite{YJia,Ngo,NXJZY,Semi} in the form of
\begin{equation}\label{eqn11}
\tau=\max_{V\in\mathcal {R}^{d\times t} \atop V^{T}V = I}\frac{{\rm tr}(V^TS_BV)}{{\rm tr}(V^TS_WV)},
\end{equation}
where tr$(\cdot)$ denotes the trace of a matrix, and $t\ll d$ is the dimension of projection subspace. However, this problem is seldom solved in
practice. It is generally considered too difficult to solve and is commonly replaced by
a simpler, but not equivalent, problem called ``Ratio Trace" problem of the following form \cite{Ngo,PP}
\begin{equation} \label{1.2}
\begin{split}
\varrho=\max_{V\in\mathcal {R}^{d\times t} \atop V^{T}V = I}
{\rm tr}\big((V^{T}S_{W}V)^{-1}(V^{T}S_{B}V)\big),
 \end{split}
 \end{equation}
and the optimal projection matrix $V$ can be calculated from solving the following generalized symmetric eigenproblem
\begin{equation} \label{1.3}
\begin{split}
S_{B}{\bf x}= \lambda S_{W}{\bf x}.
 \end{split}
 \end{equation}

In practice, the dimension of real data usually exceeds the number of training
samples,
which results in the scatter matrix $S_W$ being singular. This is called the {\it small-sample-size} (SSS) or {\it undersampled
problem} \cite{PP}. It is an intrinsic limitation of the classical
LDA method, and it is also a common problem in classification
applications \cite{PP}.
In other words, the SSS problem stems from generalized
eigenproblems with singular matrices. To tackle the SSS problem,
many variants of LDA have been proposed in recent
years. To name a few, the regularized LDA method (RLDA) \cite{JHF}, LDA$+$PCA \cite{BHK}, the null-space LDA method (NLDA) \cite{CLKLY}, LDA/QR \cite{YL},
LDA/GSVD \cite{HJP,YJPP}, the direct LDA method (DLDA) \cite{YY}, the orthogonal LDA method (OLDA) \cite{Orth,Ye}, the neighborhood minmax projections (NMMP) \cite{minmax}, and so on. The above variations on LDA have both advantages and disadvantages \cite{PP,ZF}.
An {\it et al.} \cite{An} unified these LDA
variants in one framework: principal component analysis plus
constrained ridge regression.

Recently, a novel method based on matrix exponential, called exponential discriminant analysis (EDA), was proposed in \cite{ZF}. Instead of the LDA criterion (\ref{1.2}), EDA considers the following criterion
\begin{equation} \label{2.01}
\rho=\max_{\ V\in\mathcal {R}^{d\times t} \atop V^{T}V = I}
{\rm tr}\big((V^{T}{\rm exp}(S_{W})V)^{-1}(V^{T}{\rm exp}(S_{B})V)\big),
\end{equation}
where ${\rm exp}(\cdot)$ denotes exponential of a matrix or scalar in this paper. The projection matrix $V$ can be obtained from solving the $t$ dominant eigenvectors of the following {\it generalized symmetric matrix exponential eigenproblem} \cite{ZF}
\begin{equation} \label{2.02}
\textrm{exp}(S_{B}){\bf x}= \lambda \textrm{exp}(S_{W}){\bf x}.
\end{equation}

The framework of the EDA method for dimensionality reduction has gained wide attention in recent years.
For instance, an exponential locality preserving projections (ELPP) \cite{WCP} was proposed to
avoid the SSS problem occured in locality preserving projections (LPP). Wang {\it et al.} \cite{Wang} applied matrix exponential to extend many
popular Laplacian embedding algorithms such as locality preserving
projections, unsupervised discriminant projections, and marginal
fisher analysis.
A matrix exponential local discriminant embedding method (ELDE) was investigated in \cite{DB} to deal with the SSS problem appeared in local discriminant embedding (LDE).
Using the method of exponential discriminant analysis, Ahmed \cite{Ah} proposed a novel image clustering model that incorporated both local and global information in image database.
A 2DEDA method was presented in \cite{YPan}, which is an algorithm based on image matrices (2D data)
rather than image vectors (1D data). Thus, 2DEDA can be viewed as a generalization of the
EDA method to 2D data.

It has been widely shown that the EDA method has more discriminant power
than its original counterpart \cite{Ah,DB,WCP,Wang,YPan,ZF}. In the EDA framework, the matrix exponential can
be roughly interpreted as a random walk over the feature
similarity matrix, and thus is more robust. As the exponentials of $S_W$ and $S_B$ are symmetric positive definite (SPD),
the EDA method naturally deals with the SSS problem. Moreover, the behavior of the decay property of matrix exponential
is more significant in emphasizing small distance pairs \cite{Wang}.
However, in all the EDA-based methods, one has to solve the large matrix exponential eigenproblem (\ref{2.02}) for data with high dimensionality \cite{Ah,DB,WCP,Wang,YPan,ZF},
and the time complexity is dominated by
the computation of ${\rm exp}(S_B)$ and ${\rm exp}(S_W)$ \cite[pp.191]{ZF}.
This cost will be prohibitively large as the exponential of a matrix is often dense, even if the matrix in question is sparse \cite{Higham}.
Thus, for data with high dimension, the EDA method often suffers from heavy overhead and storage requirement.
So it is urgent to seek new technologies to speed up the solution of the large generalized matrix exponential eigenproblem arising in the framework of EDA.

Modern numerical linear algebra exploits the Krylov subspace method in different
advanced iterative procedures for large scale eigenvalue problems \cite{Bai,Saad,Stewart}. Indeed, this type of method ranks among ``{\it The Top 10
Algorithms of the 20th Century}" \cite{Top10}. In this paper, we devote ourselves to solving (\ref{2.02}) with the Krylov subspace method.
The key involves the evaluation of matrix exponential-vector products, which is a hot topic in large scientific and engineering computations \cite{Higham,ML2}. In conventional approaches, one has to evaluate these products by using some iterative methods \cite{Higham,ML2}.
In this work, we derive closed-form formulae for the matrix exponential-vector products, so that these products can be formed very efficiently. The second contribution of this work is to give a theoretical comparison for the discriminant analysis criteria (\ref{1.2}) and (\ref{2.01}) of LDA and EDA, respectively, and show the reason why EDA can improve the classification rate of the original LDA method. Finally, we establish a relationship between the accuracy of the approximate eigenvectors and the distance to the nearest neighbour classifier (NN) \cite{near}, and shed light on why the matrix exponential eigenproblem can be solved approximately in practical calculations.

This paper is organized as follows. In Section 2, we briefly overview the EDA method. In Section 3, we propose two inexact Krylov subspace algorithms for EDA. Numerical experiments on some real-world face recognition sets including AR, CMU-PIE, Extended YaleB, FERET, ORL, and Yale are performed in Section 4. Concluding remarks are given in Section 5.
Some notations used are listed in Table 1.1.

{\small
\begin{table*}[!t]
\caption{{\rm Notations used in this paper}} \vspace*{-12pt}
\begin{center}
\def\temptablewidth{1\textwidth}
{\rule{\temptablewidth}{1pt}}
\begin{tabular*}{\temptablewidth}{@{\extracolsep{\fill}}lccr}
{\bf Notations}     &{\bf Descriptions}  \\\hline
$d$ & Data dimension \\
$k$ &Number of classes\\
$n$ &Number of samples\\
$t$ &Dimension of projection subspace (or number of desired eigenpairs)\\
$\ell$ & Number of samples in the training set\\
$\mathcal{X}$     &Training samples in a $d$-dimensional feature space  \\
$\mathcal{X}_i$   &The $i$-th class of training samples   \\
$\chi_i$  &The $i$-th sample\\
$S_W$             &The within-class scatter matrix defined in (\ref{eqn1})    \\
$S_B$    &The between-class scatter matrix defined in (\ref{eqn2})  \\
${\rm exp}(S_W),{\rm exp}(S_B)$    & Exponentials of $S_W$ and $S_B$  \\
${\rm exp}^{1/2}(-S_W)$    &Square root of ${\rm exp}(-S_W)$ \\
$\lambda,{\bf x}$ &Eigenvalue and eigenvector\\
$\mathcal{K}_{m}(A,{\bf v}_{1})$ & Krylov subspace with respect to $A$ and ${\bf v}_1$\\
${\rm tr}(A)$    &Trace of the matrix $A$ \\
$V$    &Projection matrix composed of the desired eigenvectors  \\
$A^{T}$ & Transpose of $A$\\
$I$ &The identity matrix\\
$O$ &Zero matrix or vector\\
span$\{W\}$ & Subspace spanned by the columns of $W$\\
dim$(\mathcal{W})$ & Dimension of the subspace $\mathcal{W}$ \\
$\|\cdot\|_2$ & 2-norm of a vector or matrix
 \end{tabular*}
 {\rule{\temptablewidth}{1pt}}\\
 \end{center}
 \end{table*}
}

\section{The exponential discriminant analysis method}\label{sec2}
\setcounter{equation}{0}
Given an arbitrary square matrix $A$, its exponential is defined as \cite{GV,Higham}
\begin{equation}
{\rm exp}(A)=\sum_{j=0}^{\infty}\frac{A^{j}}{j!}=I+A+\frac{A^2}{2}+\cdots+\frac{A^s}{s!}+\cdots,
\end{equation}
where $I$ is the identity matrix. An important consequence is that {\it any} matrix exponential is {\it invertible}. Indeed, we have
$$
{\rm exp}(-A)\cdot{\rm exp}(A)={\rm exp}(A-A)=I,
$$
i.e.,
\begin{equation}\label{2.2}
{\rm exp}^{-1}(A)={\rm exp}(-A).
\end{equation}
Suppose that $A\in\mathbb{R}^{d\times d}$ is symmetric and let $A=X\Lambda X^{-1}$ be the eigen-decomposition of $A$, where $\Lambda={\rm diag}\{\lambda_1,\lambda_2,\ldots,\lambda_d\}$ is diagonal and $X$ is a $d\times d$ matrix with its columns being the eigenvectors. Then it is easy to see that
\begin{equation}
{\rm exp}(A)=X{\rm exp}(\Lambda)X^{-1}.
\end{equation}
In other words, $A$ and ${\rm exp}(A)$ share the same eigenvectors, and the eigenvalues of ${\rm exp}(A)$ are ${\rm exp}(\Lambda)={\rm diag}\{{\rm exp}(\lambda_1),{\rm exp}(\lambda_2),\ldots,{\rm exp}(\lambda_d)\}$.

The exponential discriminant analysis method (EDA) makes use of the exponential criterion (\ref{2.01}), and the projection matrix $V$
is obtained from solving the matrix exponential eigenproblem (\ref{2.02}). This method is equivalent to transforming the original
data into a new space by distance diffusion mapping, and
the LDA criterion is applied in such a new space \cite{ZF}.
The EDA algorithm is described as follows:
\begin{algorithm} \label{Alg1} {\bf The exponential discriminant analysis method (EDA)}~{\rm\cite{ZF}}\\
\textbf{Input:} The data matrix $\mathcal{X}=[\chi_{1},\chi_{2},\ldots,\chi_{n}]\in\mathbb{R}^{d\times n}$, where $\chi_{j}$ represernts the $j$-th training image.\\
\textbf{Output:} The projection matrix $V$.\\
1.~Compute the matrices $S_{B}$, $S_{W}$, ${\rm exp}(S_{B})$, and ${\rm exp}(S_{W})$;\\
2.~Compute the eigenvectors $\{{\bf x}_{i}\}$ and
eigenvalues $\{\lambda_{i}\}$ of ${\rm exp}(S_{W})^{-1}{\rm exp}(S_{B})$;\\
3.~Sort the eigenvectors $V=\{{\bf x}_{i}\}$
 according to $\lambda_{i}$ in decreasing order;\\
4.~Orthogonalize the columns of the projection matrix $V$.
\end{algorithm}

As a result of diffusion mapping, the
margin between different classes is enlarged in the EDA framework, which is helpful in
improving classification accuracy \cite{ZF}.
In the small-sample-size
case, EDA can extract not only the most discriminant information
that is contained in the null space of the within-class
scatter matrix, where it is similar to NLDA; but also the
discriminant information that is contained in the non-null
space of the within-class scatter matrix, and it is equivalent
to LDA$+$PCA \cite{ZF}.
In addition,
there are at most $k-1$ nonzero generalized
eigenvalues in LDA, where $k$ is the number of classes in the
data set, so the dimension of the projected subspace
is at most $k-1$ in LDA. As a comparison, there is no (theoretical) dimensionality limitation
in EDA, and we can project the input data into a low-dimensional
subspace whose dimension is larger than the number of classes.

{\bf Example 1.1}
{\it We illustrate the superiority of the EDA method over the LDA method via a toy example.
Consider the set of training samples
$\mathcal{X}=[\chi_1,\chi_2,\chi_3]\in\mathbb{R}^{3\times 3}$, whose dimension is equal to the number of samples.
Let the original data be partitioned into $k=3$ classes:
$
\mathcal{X}=[\chi_1,\chi_2,\chi_3]=[\mathcal{X}_1,\mathcal{X}_2,\mathcal{X}_3],
$
and assume that the columns of $\mathcal{X}$ are linear independent. So we have $\mu_i=\chi_i,i=1,2,3$, the global centroid $\mu=(\mu_1+\mu_2+\mu_3)/3$,
and
$$
H_W=[\chi_1-\mu_1,\chi_2-\mu_2,\chi_3-\mu_3]=O,
$$
$$
H_B=[\chi_1-\mu,\chi_2-\mu,\chi_3-\mu].
$$
Therefore, $S_W=H_WH_W^T=O$, $S_B=H_BH_B^T$, and the generalized eigenproblem {\rm(}\ref{1.3}{\rm)} for the LDA method becomes
\begin{equation}\label{eqn2.4}
S_B{\bf x}=O.
\end{equation}
Since the columns of $\mathcal{X}$ are linear independent, we have ${\rm rank}(S_B)=2$, and there is only one solution vector {\rm(}up to a scalar{\rm)} for {\rm(}\ref{eqn2.4}{\rm)}.

On the other hand, the matrix-exponential eigenproblem {\rm(}\ref{2.02}{\rm)} for the EDA method reduces to
\begin{equation}
\exp{(S_B)}{\bf x}=\lambda{\bf x},
\end{equation}
which has three linear independent solution vectors. Therefore, the LDA method loses some useful discriminant information, and the EDA method can perform better than the LDA method.}

As was stressed in \cite[pp.191]{ZF}, however, the time complexity of EDA is dominated by
the computation of ${\rm exp}(S_B)$ and ${\rm exp}(S_W)$, as well as the evaluation of the large matrix exponential eigenproblem. More precisely,
one has to explicitly form and store ${\rm exp}(S_B)$ and
${\rm exp}(-S_W)$, as well as their product ${\rm exp}(-S_{W}){\rm exp}(S_{B})$. The
complexity is $\mathcal{O}(d^3)$, which is prohibitively large as the dimension of the data is high \cite{Higham}. Moreover, we have to solve a large matrix exponential eigenproblem with respect to ${\rm exp}(S_{W})^{-1}{\rm exp}(S_{B})$. If the matrix exponential eigenproblem is solved by using the implicit QR algorithm \cite{Bai,GV}, another $\mathcal{O}(d^3)$ flops are required \cite{Bai,GV}.
Therefore, it is necessary to seek new strategies to improve the numerical performance of the EDA method.

\section{Inexact Krylov subspace algorithms for matrix exponential discriminant analysis} \label{sec3}
\setcounter{equation}{0}
In this section, we propose two inexact Krylov subspace algorithms for solving (\ref{2.02}).
We first consider how to compute the matrix exponential-vector products involved in the Krylov subspace method, and then
derive new lower and upper bounds on the EDA criterion (\ref{2.01}) and the LDA  criterion (\ref{1.2}).
Finally, we provide a practical stopping criterion for solving the matrix-exponential eigenproblem approximately.

\subsection{Computing the matrix exponential-vector products efficiently}

The generalized matrix exponential eigenproblem (\ref{2.02}) is mathematically equivalent to the following {\it standard nonsymmetric matrix exponential eigenproblem} \begin{equation} \label{26}
\textrm{exp}(-S_{W})\textrm{exp}(S_{B}){\bf x}= \lambda{\bf x}, \\
\end{equation}
where we used $\textrm{exp}^{-1}(S_{W})=\textrm{exp}(-S_{W})$.
As it is only required to calculate $t~(t\ll d)$ dominant eigenpairs of the large matrix $\textrm{exp}(-S_{W})\textrm{exp}(S_{B})$, we are interested in solving the large matrix exponential eigenproblem by using the Krylov subspace method.

The Arnoldi method is a widely used Krylov subspace method for finding a few extreme eigenvalues
and their associated eigenvectors of a large nonsymmetric matrix \cite{Bai,Saad,Stewart}. It requires only matrix-vector products to extract eigen-information
to compute the desired solutions, and thus is very attractive in practice when the matrix is
too large to be solved by, e.g., using the implicit QR algorithm \cite{Bai,GV}; or the matrix
does not exist explicitly but in the form of being capable of generating matrix-vector
multiplications.

The principle of the Arnoldi method is described as follows.
Given a real nonsymmetric matrix $A$ and an initial real vector ${\bf v}_1$ of unit norm, in exact arithmetics, the $m$-step Arnoldi process recursively generates an orthonormal basis
$\mathcal{V}_m=[{\bf v}_1,{\bf v}_2,\ldots,{\bf v}_m]$ for the Krylov subspace
$$
\mathcal{K}_m(A,{\bf v}_1)={\rm span}\{{\bf v}_1, A{\bf v}_1,\ldots,A^{m-1}{\bf v}_1\}.
$$
At the same time, the
projection of $A$ onto $\mathcal{K}_m(A,{\bf v}_1)$ is expressed as an upper Hessenberg matrix $H_m=\mathcal{V}_m^T A\mathcal{V}_m$. Afterwards the
Rayleigh-Ritz procedure \cite{Bai,Saad,Stewart} is applied to compute approximate eigenpairs of $A$: Let $(\widetilde{\lambda},\widetilde{\bf y})$ be some eigenpairs
of $H_m$, then we construct some approximate eigenpairs $(\widetilde{\lambda},\mathcal{V}_m\widetilde{\bf y})$ of $A$. Here $\widetilde{\lambda}$ is called a Ritz value and $\mathcal{V}_m\widetilde{\bf y}$ is called a Ritz vector.

Thus, denote $A={\rm exp}(-S_W){\rm exp}(S_B)$, the key ingredient of the Arnoldi method is to evaluate the matrix exponential-vector products
\begin{equation}\label{319}
{\bf w}=A{\bf v}={\rm exp}(-S_W){\rm exp}(S_B){\bf v}
\end{equation}
for different given vectors ${\bf v}$.
When the matrices $S_W,S_B$ are large, Krylov subspace methods are widely used for this type of problem \cite{Higham,ML2}.
Generally speaking, there are two classes of Krylov subspace methods for evaluating (\ref{319}) {\it iteratively} \cite{PS2}. In the first class of methods, the matrix is projected into
a much smaller subspace, then the exponential is applied to the reduced matrix,
and finally the approximation is projected back to the original large space \cite{Saad1}.
In the second class of methods, the exponential function is first approximated
by simpler functions such as rational functions,
and then the action of the matrix exponential is
calculated \cite{WW}. In this case, the core of (\ref{319}) reduces to solving some linear
systems with multiple shifts \cite{WPS}.

Thanks to the structure of ${\rm exp}(-S_W)$ and ${\rm exp}(S_B)$, instead of evaluating (\ref{319}) {\it iteratively}, we compute it directly by using a {\it closed-form} formula. More precisely, let
\begin{equation}\label{3.10}
H_{W}=Q_{W}R_{W},\quad H_{B}=Q_{B}R_{B}
\end{equation}
be the (skinny) QR decomposition \cite{GV} of the matrices $H_{W}$ and $H_{B}$ from (\ref{eqn1}) and (\ref{eqn2}), respectively, where $Q_{B}\in\mathbb{R}^{d\times k}, Q_{W}\in\mathbb{R}
^{d\times n}$ are orthonormal, and $R_{B}\in\mathbb{R}^{k\times k},
R_{W}\in\mathbb{R}^{n\times n}$ are upper triangular. Denote by
\begin{equation}\label{3.11}
R_{B}=U_{B}\Sigma_{B}V_{B}^{T},\quad
R_{W}=U_{W}\Sigma_{W}V_{W}^{T}
\end{equation}
the singular value decomposition (SVD) \cite{GV} of $R_B$ and $R_W$, respectively, and let
\begin{equation}\label{3.12}
\widetilde{Q}_{B}=Q_{B}U_{B}, \quad D_{B}=\Sigma_{B}^{2},
\end{equation}
and
\begin{equation}\label{3.13}
\widetilde{Q}_{W}=Q_{W}U_{W},\quad
D_{W}=\Sigma_{W}^{2},
\end{equation}
Then we have the following theorem:
\begin{theorem}\label{Thm3.1}
Let $\widetilde{Q}_B^{\bot}$ and $\widetilde{Q}_W^{\bot}$ be orthonormal bases for the orthogonal complement of ${\rm span}\{\widetilde{Q}_B\}$ and ${\rm span}\{\widetilde{Q}_W\}$, respectively, and let $f$ be a function defined on the spectra of $S_W$ and $S_B$ defined in {\rm(}\ref{eqn1}{\rm)} and {\rm(}\ref{eqn2}{\rm)}, respectively. Then
\begin{equation}\label{eqn317}
f(S_B)=\widetilde{Q}_Bf(D_B)\widetilde{Q}_B^{T}+\widetilde{Q}_B^{\bot}f(O_{(d-k)\times (d-k)})(\widetilde{Q}_B^{\bot})^{T}
\end{equation}
and
\begin{equation}\label{eqn318}
f(S_W)=\widetilde{Q}_Wf(D_W)\widetilde{Q}_W^{T}+\widetilde{Q}_W^{\bot}f(O_{(d-n)\times (d-n)})(\widetilde{Q}_W^{\bot})^{T},
\end{equation}
where $O_{(d-k)\times (d-k)}$ and $O_{(d-n)\times (d-n)}$ are $(d-k)\times (d-k)$ and $(d-n)\times (d-n)$ zero matrices, respectively.
In particular, for the matrix exponential, we have that
\begin{equation}\label{317}
{\rm exp}(S_B)=\widetilde{Q}_{B}{\rm exp}(D_B)\widetilde{Q}_{B}^{T}
+ I - \widetilde{Q}_{B}\widetilde{Q}_{B}^{T},
\end{equation}
\begin{equation}\label{316}
{\rm exp}(-S_W)=
\widetilde{Q}_{W}{\rm exp}(-D_W)\widetilde{Q}_{W}^{T} + I -
\widetilde{Q}_{W}\widetilde{Q}_{W}^{T},
\end{equation}
and
\begin{equation}\label{318}
{\rm exp}^{1/2}(-S_W)=\widetilde{Q}_{W}{\rm exp}\Big(-\frac{1}{2}D_{W}\Big)\widetilde{Q}_{W}^{T}+I -
\widetilde{Q}_{W}\widetilde{Q}_{W}^{T},
\end{equation}
where ${\rm exp}^{1/2}(-S_{W})$ represents the square root of ${\rm exp}(-S_{W})$.
\end{theorem}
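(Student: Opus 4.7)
\medskip

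\noindent\textbf{Proof plan.} The plan is to show that, under the factorizations given, the diagonalization of $S_B$ is essentially already done: the matrix $\widetilde{Q}_B D_B \widetilde{Q}_B^T$ is a compressed spectral decomposition, and extending $\widetilde{Q}_B$ by an orthonormal basis $\widetilde{Q}_B^\bot$ of its orthogonal complement yields a full eigendecomposition of $S_B$ on $\mathbb{R}^d$. The same argument then applies verbatim to $S_W$, and the exponential formulas fall out by choosing the obvious $f$.

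First I would plug the QR factorization into the definition of $S_B$ and use orthogonality of $V_B$ in the SVD of $R_B$ to compute
\begin{equation*}
S_B = H_B H_B^T = Q_B R_B R_B^T Q_B^T = Q_B U_B \Sigma_B V_B^T V_B \Sigma_B U_B^T Q_B^T = \widetilde{Q}_B D_B \widetilde{Q}_B^T,
\end{equation*}
where $D_B = \Sigma_B^2$ is diagonal nonnegative and $\widetilde{Q}_B = Q_B U_B \in \mathbb{R}^{d\times k}$ has orthonormal columns (since both $Q_B$ and $U_B$ do). The analogous identity $S_W = \widetilde{Q}_W D_W \widetilde{Q}_W^T$ follows in exactly the same way.

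Next I would complete $\widetilde{Q}_B$ to an orthogonal matrix $[\widetilde{Q}_B,\widetilde{Q}_B^\bot]\in\mathbb{R}^{d\times d}$ and rewrite the previous identity in block form
\begin{equation*}
S_B = [\widetilde{Q}_B,\widetilde{Q}_B^\bot]
\begin{pmatrix} D_B & O \\ O & O_{(d-k)\times(d-k)} \end{pmatrix}
[\widetilde{Q}_B,\widetilde{Q}_B^\bot]^T.
\end{equation*}
Because the inner matrix is diagonal, this is a bona fide eigendecomposition of the symmetric matrix $S_B$, with spectrum $\sigma(S_B) = \{\text{diag}(D_B)\} \cup \{0\}$. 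The functional calculus then gives $f(S_B) = [\widetilde{Q}_B,\widetilde{Q}_B^\bot]\,\text{diag}\bigl(f(D_B),f(O_{(d-k)\times(d-k)})\bigr)\,[\widetilde{Q}_B,\widetilde{Q}_B^\bot]^T$, which is precisely \eqref{eqn317} after expanding the block product. The same computation applied to $S_W$ yields \eqref{eqn318}.

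Finally, for the three matrix-exponential identities I would specialize $f$. For $f(x) = e^x$ we have $f(O_{(d-k)\times(d-k)}) = I_{d-k}$, so $\widetilde{Q}_B^\bot f(O)(\widetilde{Q}_B^\bot)^T = \widetilde{Q}_B^\bot(\widetilde{Q}_B^\bot)^T = I - \widetilde{Q}_B\widetilde{Q}_B^T$ by the orthogonality relation $[\widetilde{Q}_B,\widetilde{Q}_B^\bot][\widetilde{Q}_B,\widetilde{Q}_B^\bot]^T = I$; this yields \eqref{317}. Formula \eqref{316} follows identically with $f(x)=e^{-x}$ on $S_W$, and \eqref{318} follows from $f(x)=e^{-x/2}$, using that the principal square root of a symmetric positive definite matrix agrees with $f$ applied via the eigendecomposition. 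There is no real obstacle here; the only thing one must be careful about is verifying that $\widetilde{Q}_B$ and $\widetilde{Q}_W$ inherit orthonormality from the factors, and that padding with the zero block on the complement subspace is consistent with the eigenvalue $0$ of $S_B$ (respectively $S_W$), which is automatic since $\text{range}(S_B) \subseteq \text{range}(Q_B) = \text{range}(\widetilde{Q}_B)$.
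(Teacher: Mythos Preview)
Your proposal is correct and follows essentially the same approach as the paper: both compute $S_B=\widetilde{Q}_B D_B\widetilde{Q}_B^T$ from the QR and SVD factorizations, extend $\widetilde{Q}_B$ to a full orthogonal basis to obtain a block-diagonal eigendecomposition, apply the functional calculus, and then specialize to the exponential using $\widetilde{Q}_B^\bot(\widetilde{Q}_B^\bot)^T=I-\widetilde{Q}_B\widetilde{Q}_B^T$ together with $\exp(O)=I$. The paper's write-up is slightly terser in the final step, but there is no substantive difference in method.
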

\begin{proof}
We only prove (\ref{eqn317}), and the proof of (\ref{eqn318}) is similar. It follows from (\ref{eqn1}), (\ref{eqn2}) and (\ref{3.10})--(\ref{3.13}) that
\begin{eqnarray*}
S_{B}&=&H_{B}H_{B}^{T}=(Q_{B}U_{B})\Sigma_{B}(V_{B}^{T}V_{B})\Sigma_{B}(U_{B}^{T}Q_{B}^{T})=\widetilde{Q}_{B}D_{B}\widetilde{Q}_{B}^{T}\\
&=&\left[\begin{array}{cc}
\widetilde{Q}_B & \widetilde{Q}_B^{\bot}
\end{array} \right]
\left[\begin{array}{cc}
D_B &  \\  & O_{(d-k)\times (d-k)}
\\\end{array}\right]
\left[\begin{array}{c}
\widetilde{Q}_B^{T} \\ (\widetilde{Q}_B^{\bot})^{T}
\\\end{array}\right].
\end{eqnarray*}
From the properties of matrix function \cite[Theorem 1.13]{Higham}, we obtain
\begin{eqnarray*}
f(S_{B})&=&\left[\begin{array}{cc}
\widetilde{Q}_B & \widetilde{Q}_B^{\bot}
\end{array} \right]
\left[\begin{array}{cc}
f(D_B) &  \\  & f(O_{(d-k)\times (d-k)})
\\\end{array}\right]
\left[\begin{array}{c}
\widetilde{Q}_B^{T} \\ (\widetilde{Q}_B^{\bot})^{T}
\\\end{array}\right]\\
&=&\left[\begin{array}{cc}
\widetilde{Q}_B f(D_B) & \widetilde{Q}_B^{\bot}f(O_{(d-k)\times (d-k)})
\end{array} \right]
\left[\begin{array}{c}
\widetilde{Q}_B^{T} \\ (\widetilde{Q}_B^{\bot})^{T}
\\\end{array}\right]\\
&=&\widetilde{Q}_Bf(D_B)\widetilde{Q}_B^{T}+\widetilde{Q}_B^{\bot}f(O_{(d-k)\times (d-k)})(\widetilde{Q}_B^{\bot})^{T}.
\end{eqnarray*}
And (\ref{317})--(\ref{318}) follow from
$$
\widetilde{Q}_B^{\bot}(\widetilde{Q}_B^{\bot})^{T}=I-\widetilde{Q}_B\widetilde{Q}_B^{T},~\widetilde{Q}_W^{\bot}(\widetilde{Q}_W^{\bot})^{T}
=I-\widetilde{Q}_W\widetilde{Q}_W^{T},
$$
and the fact that $\exp(O)=I$ for any square zero matrix.
\end{proof}


\begin{rem}
According to {\rm(}\ref{317}{\rm)} and {\rm(}\ref{316}{\rm)}, for a given vector ${\bf v}$, we can compute the matrix exponential-vector product {\rm(}\ref{319}{\rm)}
within two steps:
\begin{flushleft}
{\rm(i)}~${\bf v}=\widetilde{Q}_{B}{\rm exp}(D_B)(\widetilde{Q}_{B}^{T}{\bf v})
+ {\bf v} - \widetilde{Q}_{B}(\widetilde{Q}_{B}^{T}{\bf v})$;\\
{\rm(ii)}~${\bf v}=\widetilde{Q}_{W}{\rm exp}(-D_W)(\widetilde{Q}_{W}^{T}{\bf v})
+{\bf v} - \widetilde{Q}_{W}(\widetilde{Q}_{W}^{T}{\bf v})$.
\end{flushleft}
Notice that $D_B$ and $D_W$ are $k\times k$ and $n\times n$ diagonal matrices, respectively.
As a result, there is no need to explicitly form and store the large matrices $S_B$, $S_W$ and their exponentials ${\rm exp}(S_{B})$, ${\rm exp}(-S_{W})$ in the EDA framework.
\end{rem}


Indeed, the {\it generalized matrix exponential eigenproblem}
(\ref{2.02}) can also be reformulated as a {\it standard symmetric eigenvalue problem}, say, by using
${\rm exp}(S_W)$'s Cholesky decomposition \cite{GV}. Unfortunately, calculating the Cholesky decomposition of a large matrix can be very expensive. Rather
than using the Cholesky decomposition, we consider ${\rm exp}(S_W)$'s square root \cite{Higham} that is advantageous
for both computational and theoretical considerations.
By (\ref{2.02}),
\begin{eqnarray}\label{3.1}
\big[{\rm exp}^{-1/2}(S_{W}){\rm exp}(S_B){\rm exp}^{-1/2}(S_{W})\big]\big[{\rm exp}^{1/2}(S_{W}){\bf x}\big]
=\lambda \big[{\rm exp}^{1/2}(S_{W}){\bf x}\big].
\end{eqnarray}
Denote ${\rm exp}^{-1/2}(S_{W})$ the inverse of square root of ${\rm exp}(S_{W})$, and note that ${\rm exp}^{-1/2}(S_{W})={\rm exp}^{1/2}(-S_{W})$, (\ref{3.1}) can be rewritten as
\begin{equation}\label{3.2}
\big[{\rm exp}^{1/2}(-S_{W}){\rm exp}(S_B){\rm exp}^{1/2}(-S_{W})\big]{\bf y}=\lambda{\bf y},
\end{equation}
and
\begin{equation}\label{eqn311}
{\bf x}={\rm exp}^{1/2}(-S_{W}){\bf y}
\end{equation}
is the desired solution.
Notice that the matrix
\begin{equation}\label{eqn315}
M={\rm exp}^{1/2}(-S_{W}){\rm exp}(S_B){\rm exp}^{1/2}(-S_{W})
\end{equation}
is a symmetric positive definite (SPD) matrix, and our aim is to compute a few dominate eigenpairs of it. The (symmetric) Lanczos method \cite{Bai,Saad,Stewart} is a widely used Krylov subspace method for finding a few extreme eigenvalues
and their associated eigenvectors of a large symmetric matrix.
The Lanczos method can be viewed as a simplification of Arnoldi's method
for the particular case where the matrix in question is symmetric. In this case the Hessenberg
matrix $H_m$ becomes symmetric tridiagonal, which leads to a three-term recurrence in the Lanczos process.
Similar to the Arnoldi process, in the Lanczos process, we need to compute the matrix exponential-vector products
\begin{equation}\label{320}
{\bf w}=M{\bf v}={\rm exp}^{1/2}(-S_{W}){\rm exp}(S_B){\rm exp}^{1/2}(-S_{W}){\bf v}
\end{equation}
with different vectors ${\bf v}$. In terms of (\ref{317}), (\ref{316}) and (\ref{318}), this problem can be solved within three steps:
\begin{flushleft}
(iii)~${\bf v}=\widetilde{Q}_{W}{\rm exp}(-1/2\times D_{W})\widetilde{Q}_{W}^{T}{\bf v}+{\bf v}-
\widetilde{Q}_{W}\widetilde{Q}_{W}^{T}{\bf v}$;\\
(iv)~${\bf v}=\widetilde{Q}_{B}{\rm exp}(D_B)\widetilde{Q}_{B}^{T}{\bf v}
+ {\bf v} - \widetilde{Q}_{B}\widetilde{Q}_{B}^{T}{\bf v}$;\\
(v)~${\bf v}=\widetilde{Q}_{W}{\rm exp}(-1/2\times D_{W})\widetilde{Q}_{W}^{T}{\bf v}+{\bf v}-
\widetilde{Q}_{W}\widetilde{Q}_{W}^{T}{\bf v}$.
\end{flushleft}

Compared with the Arnoldi method for the standard nonsymmetric matrix exponential eigenproblem {\rm(}\ref{26}{\rm)}, the advantage of the Lanczos method
for {\rm(}\ref{3.2}{\rm)} is that one can use a three-term recurrence in the Lanczos procedure for a symmetric
eigenproblem \cite{Bai,Saad,Stewart}. The disadvantage is that one has to evaluate three matrix exponential-vector products {\rm(iii)--(v)} in each step of the Lanczos procedure, rather than two matrix exponential-vector products {\rm(i)--(ii)} in each step of the Arnoldi procedure.
So a natural question is: which one is better, Arnoldi or Lanczos? We believe that the answer is problem-dependent. Indeed, we cannot tell which one is {\it definitely better} than the other; refer to the numerical experiments given in Section 4.
The pseudo-code for computing (\ref{319}) and (\ref{320}) is listed as follows:

\begin{algorithm}\label{Alg2}~{\bf Computing the matrix exponential-vector product (\ref{319})/(\ref{320}) in the Arnoldi/Lanczos process}

{$\bullet$ \bf Preprocessing:}

1.~Given the training set $\mathcal{X}$, form the matrices $H_{B}$ and $H_{W}$;

2.~Computing the skinny QR decompositions: $H_{B}=Q_{B}R_{B}$, $H_{W}=Q_{W}R_{W}$;

3.~Computing SVDs: $R_{B}=U_{B}\Sigma_{B}V_{B}^{T}, R_{W}=U_{W}\Sigma_{W}V_{W}^{T}$;

4.~Let $Q_{B}=Q_{B}U_{B}$, $Q_{W}=Q_{W}U_{W}$, $\Sigma_{B}={\rm exp}(\Sigma_{B}^{2})$;

{$\bullet$ \bf Computing:}

5. Given a vector ${\bf v}$;

{\rm if computing (\ref{319}) in the Arnoldi process}
\begin{tabbing}
in \= in \= in \= in \= in \= in \= in \= \hspace{3.0truein} \= \kill
\>\>\>6A.~Let $\Sigma_{W}={\rm exp}(-\Sigma_{W}^{2})$;\\

\>\>\>7A.~${\bf w}=Q_{B}^{T}{\bf v}$;\\

\>\>\>8A.~${\bf v}=Q_{B}\Sigma_{B}{\bf w}+{\bf v}-Q_{B}{\bf w}$;\\

\>\>\>9A.~${\bf w}=Q_{W}^{T}{\bf v}$;\\

\>\>\>10A.~${\bf w}=Q_{W}\Sigma_{W}{\bf w}+{\bf v}-Q_{W}{\bf w}$;\\

\>~{\rm end}
\end{tabbing}

{\rm if computing (\ref{320}) in the Lanczos process}
\begin{tabbing}
in \= in \= in \= in \= in \= in \= in \= \hspace{3.0truein} \= \kill

\>\>\>6L.~Let $\Sigma_{W}={\rm exp}(-1/2\times\Sigma_{W}^{2})$;\\

\>\>\>7L.~${\bf w}=Q_{W}^{T}{\bf v}$;\\

\>\>\>8L.~${\bf v}=Q_{W}\Sigma_{W}{\bf w}+{\bf v}-Q_{W}{\bf w}$;\\

\>\>\>9L.~${\bf w}=Q_{B}^{T}{\bf v}$;\\

\>\>\>10L.~${\bf v}=Q_{B}\Sigma_{B}{\bf w}+{\bf v}-Q_{B}{\bf w}$;\\

\>\>\>11L.~${\bf w}=Q_{W}^{T}{\bf v}$;\\

\>\>\>12L.~${\bf w}=Q_{W}\Sigma_{W}{\bf w}+{\bf v}-Q_{W}{\bf w}$;\\

\>~{\rm end}
\end{tabbing}

\end{algorithm}
\begin{rem}
In practical implementations, the ``Preprocessing" phase 1--4 is run once for all, and the variables $Q_B,Q_W,\Sigma_B$ and $\Sigma_W$ can be stored for a latter use.
In this procedure, one needs to perform two skinny QR decompositions in $\mathcal{O}(dk^2)$ and $\mathcal{O}(dn^2)$ flops \cite{GV}, respectively. Further, it is required to perform SVD of two small-sized matrices $R_B$ and $R_W$, in $\mathcal{O}(k^3)$ and $\mathcal{O}(n^3)$ flops \cite{GV}, respectively; as well as to compute a small-sized diagonal matrix exponential in $\mathcal{O}(k)$ flops \cite{Higham}, which are negligible as $k\leq n\ll d$.
In the ``Computing" phase 6A--10A and 6L--12L, the computational cost is $\mathcal{O}\big((k+n)d\big)$. Thus, as $k\leq n\ll d$, the main overhead for solving {\rm(}\ref{319}{\rm)} and {\rm(}\ref{320}{\rm)} is $\mathcal{O}(d)$.

On the other hand, if the $n$ training vectors are linear independent, the ranks of the matrices $S_{B}$ and $S_{W}$ are $k-1$ and $n-k$, respectively, and the main storage requirement of Algorithm 2 is to store $n-1$ vectors of length $d$. Therefore, the computational complexities of our new algorithms are much fewer than those of Algorithm \ref{Alg1}.
\end{rem}

\subsection{A comparison of the discriminant analysis criteria}

In this subsection, we will establish new lower and upper bounds for the criteria (\ref{1.2}) and (\ref{2.01}), and give a theoretical comparison of the LDA method and the EDA method. Recall from (\ref{26}) and (\ref{3.1}) that, the eigenvalues of the matrix $M$ from (\ref{eqn315}) are the same as those of ${\rm exp}(-S_{W}){\rm exp}(S_B)$ and the matrix exponential pencil $\big({\rm exp}(S_B),{\rm exp}(S_W)\big)$.

\begin{theorem}\label{Thm3.3}
Denote by $\nu_1\geq\nu_2\geq\cdots\geq\nu_d$ the eigenvalues of $S_W$, by $\mu_1\geq\mu_2\geq\cdots\geq\mu_d$ the eigenvalues of $S_B$, and let $\lambda_1(M)\geq\lambda_2(M)\geq\cdots\geq\lambda_d(M)$ be the eigenvalues of $M$. Then we have that for  $i=1,2,\ldots,d$,
\begin{equation}\label{333}
\lambda_{i}(M)\geq\max\big\{{\rm exp}(\mu_i-\nu_1),{\rm exp}(\mu_d-\nu_{d-i+1})\big\},
\end{equation}
and
\begin{equation}\label{444}
\lambda_{i}(M)
\leq\min\big\{{\rm exp}(\mu_i-\nu_d),{\rm exp}(\mu_1-\nu_{d-i+1})\big\}.
\end{equation}
\end{theorem}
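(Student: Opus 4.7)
My plan is to reduce Theorem~\ref{Thm3.3} to two applications of Ostrowski's eigenvalue inequality, each time exploiting the fact that cyclic permutation of a square matrix product preserves the spectrum. Recall Ostrowski's theorem: if $A$ is Hermitian and $S$ is nonsingular, then for each $i$ there is a positive scalar $\theta_i\in[\lambda_d(SS^*),\lambda_1(SS^*)]$ with $\lambda_i(SAS^*)=\theta_i\lambda_i(A)$. Since $S_W$ and $S_B$ are symmetric, $\exp(\pm S_W)$ and $\exp(\pm S_B)$ share eigenvectors with them and act on eigenvalues by the monotone map $t\mapsto{\rm exp}(t)$; this makes Ostrowski produce inequalities of exactly the form asserted.

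For the first pair of bounds, I would write $M=SAS^*$ with $S=\exp^{1/2}(-S_W)$ (symmetric, so $S^*=S$) and $A=\exp(S_B)$. Then $SS^*=\exp(-S_W)$ has largest eigenvalue ${\rm exp}(-\nu_d)$ and smallest eigenvalue ${\rm exp}(-\nu_1)$, while $\lambda_i(A)={\rm exp}(\mu_i)$. Ostrowski immediately yields
\[
{\rm exp}(\mu_i-\nu_1)\le\lambda_i(M)\le{\rm exp}(\mu_i-\nu_d),\qquad i=1,\ldots,d.
\]
To obtain the complementary bounds that feature $\nu_{d-i+1}$, I would use the identity $\sigma(PQ)=\sigma(QP)$ for square $P,Q$: the spectrum of $M$ coincides with that of $\exp(-S_W)\exp(S_B)$, hence with that of $\exp(S_B)\exp(-S_W)$, hence with that of the symmetric matrix $M':=\exp^{1/2}(S_B)\exp(-S_W)\exp^{1/2}(S_B)$. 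Applying Ostrowski to $M'$ with $S=\exp^{1/2}(S_B)$ and $A=\exp(-S_W)$, and observing that the $i$-th largest eigenvalue of $\exp(-S_W)$ is ${\rm exp}(-\nu_{d-i+1})$ (since $-\nu_1\le\cdots\le-\nu_d$), this gives
\[
{\rm exp}(\mu_d-\nu_{d-i+1})\le\lambda_i(M)\le{\rm exp}(\mu_1-\nu_{d-i+1}),\qquad i=1,\ldots,d.
\]
Taking the maximum of the two lower bounds and the minimum of the two upper bounds then delivers (\ref{333}) and (\ref{444}).

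The main obstacle, such as it is, is really only the index bookkeeping: since negation reverses the order of the eigenvalues of $S_W$ while the exponential preserves order, the $i$-th largest eigenvalue of $\exp(-S_W)$ is ${\rm exp}(-\nu_{d-i+1})$ rather than ${\rm exp}(-\nu_i)$, and one must be careful to use each version of this identification in the matching application of Ostrowski. Beyond that, the argument is a direct invocation of a classical inequality together with a cyclic permutation. An alternative, more self-contained route would work from the generalized Courant--Fischer characterization $\lambda_i(M)=\max_{\dim W=i}\min_{0\ne y\in W}\frac{y^T\exp(S_B)y}{y^T\exp(S_W)y}$, bounding numerator and denominator separately by their extreme Rayleigh quotients; this recovers the same four inequalities with essentially the same amount of work.
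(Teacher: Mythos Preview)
Your proof is correct and follows the same overall structure as the paper's: obtain one pair of bounds from $M=\exp^{1/2}(-S_W)\exp(S_B)\exp^{1/2}(-S_W)$, a second pair from the spectrally equivalent $\exp^{1/2}(S_B)\exp(-S_W)\exp^{1/2}(S_B)$, then combine. The only difference is packaging: you invoke Ostrowski's inequality as a black box, whereas the paper derives the same bounds directly from the Courant--Fischer minimax characterization, splitting the Rayleigh quotient into two factors and bounding one uniformly---precisely the ``alternative, more self-contained route'' you sketch in your last paragraph. Since the standard proof of Ostrowski goes through Courant--Fischer, the two arguments are the same at different levels of abstraction; yours is more concise, the paper's more self-contained.
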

\begin{proof}
Note that $\lambda_i(M)=\lambda_{i}\big({\rm exp}(-S_{W}){\rm exp}(S_B)\big),~i=1,2,\ldots,d$. Denote
$\widetilde{W}=\textrm{exp}^{1/2}(-S_{W})$, we have from the Courant-Fischer minimax theorem that \cite{GV}
\begin{equation} \label{3.3}
\begin{array}{lll}
&\lambda_{i}\big({\rm exp}(-S_{W}){\rm exp}(S_B)\big)=\lambda_{i}(\widetilde{W}^{T}{\rm exp}(S_{B})\widetilde{W})&\\
=&\max\limits_{{\rm dim}(S)=i}
~\min\limits_{
{\bf v}\in S,~\|{\bf v}\|_{2}=1}
{\bf v}^{T}\widetilde{W}^{T}{\rm exp}(S_{B})\widetilde{W}{\bf v}&\\
=&\max\limits_{{\rm dim}(S)=i}~\min\limits_{{\bf v}\in S,~\|{\bf v}\|_{2}=1} \frac{(\widetilde{W}{\bf v})^{T}\textrm{exp}(S_{B})(\widetilde{W}{\bf v})}{(\widetilde{W}{\bf v})^{T}(\widetilde{W}{\bf v})}\cdot\frac{{\bf v}^{T}\widetilde{W}^{T}\widetilde{W}{\bf v}}{{\bf v}^{T}{\bf v}}.&\\
\end{array}
\end{equation}
Recall that the eigenvalues of ${\rm exp}(-S_W)$ and ${\rm exp}(S_B)$ are ${\rm exp}(-\nu_d)\geq{\rm exp}(-\nu_{d-1})\geq\cdots\geq{\rm exp}(-\nu_1)$ and ${\rm exp}(\mu_1)\geq{\rm exp}(\mu_2)\geq\cdots\geq{\rm exp}(\mu_d)$, respectively. For any ${\bf v}\in \mathbb{R}^{d},~\|{\bf v}\|_2=1$, it follows that
\begin{equation}\label{3.4}
{\rm exp}(-\nu_1)\leq\frac{{\bf v}^{T}\widetilde{W}^{T}\widetilde{\
W}{\bf v}}{{\bf v}^{T}{\bf v}}=\frac{{\bf v}^{T}{\rm exp}(-S_W){\bf v}}{{\bf v}^{T}{\bf v}}\leq {\rm exp}(-\nu_d),
\end{equation}
and
\begin{equation}\label{3.5}
{\rm exp}(\mu_i)=\max_{{\rm dim}(S)=i}~\min_{{\bf v}\in S,~\|{\bf v}\|_{2}=1}\frac{(\widetilde{W}{\bf v})^{T}{\rm exp}(S_{B})(\widetilde{W}{\bf v})}{(\widetilde{W}{\bf v})^{T}(\widetilde{W}{\bf v})}.
\end{equation}
Therefore, we have from (\ref{3.3})--(\ref{3.5}) that
\begin{equation}\label{3.6}
{\rm exp}(\mu_i-\nu_1)\leq \lambda_{i}(M)
\leq {\rm exp}(\mu_i-\nu_d),\quad i=1,2,\ldots,d.
\end{equation}

On the other hand, we have
\begin{eqnarray*}
\lambda_{i}\big({\rm exp}(-S_{W}){\rm exp}(S_B)\big)=
\lambda_{i}\big({\rm exp}^{1/2}(S_B){\rm exp}(-S_{W}){\rm exp}^{1/2}(S_B)\big).
\end{eqnarray*}
Using the same trick, we can prove that for $i=1,2,\ldots,d$,
\begin{equation}\label{3.8}
{\rm exp}(\mu_d-\nu_{d-i+1})\leq \lambda_{i}(M)
\leq {\rm exp}(\mu_1-\nu_{d-i+1}).
\end{equation}
A combination of (\ref{3.6}) and (\ref{3.8}) yields (\ref{333}) and (\ref{444}).
\end{proof}

We are in a position to establish new lower and upper bounds for the $\rho$ value defined in (\ref{2.01}). First we need the following lemma.

\begin{lemma}\label{Lem3.4}\cite[Corollary 4.6.4, pp.101]{WWJ}
Let $A$ and $B$ be two $d\times d$ Hermitian matrix, where $A$ is semi-positive definite and $B$ is positive definte. Let
$X$ be a $d\times t$ matrix with ${\rm rank}(X)=t$. Then
$$
{\rm tr}\big(X^HAX(X^HBX)^{-1}\big)\geq\sum_{i=1}^t\lambda_{d-t+i}(B^{-1}A),
$$
and
$$
{\rm tr}\big(X^HAX(X^HBX)^{-1}\big)\leq\sum_{i=1}^t\lambda_{i}(B^{-1}A).
$$
\end{lemma}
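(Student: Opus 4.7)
The plan is to reduce the problem, via a congruence transformation, to the classical Ky Fan trace bounds for a Hermitian matrix restricted to a subspace. Roughly: the ratio-like trace quantity ${\rm tr}(X^H A X (X^H B X)^{-1})$ is invariant under right-multiplication of $X$ by any invertible $t\times t$ matrix, so we may pick a particularly convenient representative for $X$ and then read off the bounds from a minimax principle.

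First I would symmetrize by using the positive definite square root $B^{1/2}$. Set $Y = B^{1/2} X$, which is still $d\times t$ of rank $t$, and introduce $C = B^{-1/2} A B^{-1/2}$, a Hermitian positive semi-definite matrix similar to $B^{-1}A$ via the congruence $B^{-1}A = B^{-1/2} C B^{1/2}$. Consequently $\lambda_i(C) = \lambda_i(B^{-1}A)$ for all $i$. A direct substitution then gives
\begin{equation*}
X^H A X = Y^H C Y, \qquad X^H B X = Y^H Y.
\end{equation*}

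Second I would eliminate the Gram factor on the right. Take a thin QR factorization $Y = QR$, where $Q\in\mathbb{C}^{d\times t}$ has orthonormal columns and $R\in\mathbb{C}^{t\times t}$ is upper triangular and (since $Y$ has full column rank) invertible. Then $Y^HY = R^H R$ and $Y^H C Y = R^H Q^H C Q R$, and by the cyclic property of the trace
\begin{equation*}
{\rm tr}\bigl(X^H A X (X^H B X)^{-1}\bigr) = {\rm tr}\bigl(R^H Q^H C Q R\,(R^H R)^{-1}\bigr) = {\rm tr}(Q^H C Q).
\end{equation*}
Hence the quantity to bound is just the trace of $C$ compressed onto the $t$-dimensional subspace spanned by the columns of $Q$.

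Finally I would invoke the Ky Fan trace bounds (equivalently, the Courant--Fischer minimax characterization summed over $t$ consecutive indices): for any Hermitian $C$ with eigenvalues $\lambda_1(C)\geq\cdots\geq\lambda_d(C)$ and any $Q\in\mathbb{C}^{d\times t}$ with $Q^H Q = I_t$,
\begin{equation*}
\sum_{i=1}^{t}\lambda_{d-t+i}(C) \;\leq\; {\rm tr}(Q^H C Q) \;\leq\; \sum_{i=1}^{t}\lambda_i(C).
\end{equation*}
Substituting $\lambda_i(C) = \lambda_i(B^{-1}A)$ yields both inequalities claimed in the lemma. The only subtle point—and the main place where one could slip—is making sure that the similarity $C \sim B^{-1}A$ (and not a mere spectral inclusion) really identifies the eigenvalues in the correct order; this is immediate from $C = B^{1/2}(B^{-1}A)B^{-1/2}$. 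Everything else is routine linear algebra once the reduction to orthonormal $Q$ has been carried out.
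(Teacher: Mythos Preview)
Your argument is correct. The symmetrization $C=B^{-1/2}AB^{-1/2}$ together with the QR trick cleanly reduces the claim to the Ky Fan trace inequalities for $\mathrm{tr}(Q^{H}CQ)$ with $Q^{H}Q=I_t$, and the similarity $C=B^{1/2}(B^{-1}A)B^{-1/2}$ does identify the ordered eigenvalues of $C$ with those of $B^{-1}A$ as you note.

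As for comparison: the paper does not actually prove this lemma. It is quoted verbatim from \cite[Corollary~4.6.4, p.~101]{WWJ} and used as a black box in the proof of Theorem~\ref{Thm3.5}. So your write-up supplies a self-contained derivation where the paper relies on an external reference; your route via the positive square root and Ky Fan is the standard one and is almost certainly what lies behind the cited corollary.
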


Combining Theorem \ref{Thm3.3} and Lemma \ref{Lem3.4}, we get the theorem as follows for the exponential discriminant analysis criterion (\ref{2.01}).

\begin{theorem}\label{Thm3.5}
Under the above notations, for the EDA criterion, there holds
\begin{equation}\label{eqn320}
\rho\geq\sum_{i=1}^t\max\big\{{\rm exp}(\mu_{d-t+i}-\nu_1),{\rm exp}(\mu_d-\nu_{t-i+1})\big\},
\end{equation}
and
\begin{equation}\label{eqn3202}
\rho\leq\sum_{i=1}^t\min\big\{{\rm exp}(\mu_i-\nu_d),{\rm exp}(\mu_1-\nu_{d-i+1})\big\}.
\end{equation}
\end{theorem}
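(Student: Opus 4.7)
The plan is to combine Lemma~\ref{Lem3.4} with Theorem~\ref{Thm3.3}, viewing $\rho$ as a trace that can be sandwiched by partial sums of eigenvalues of the pencil, and then translating those eigenvalues into expressions involving $\mu_i$ and $\nu_i$.

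First, I would observe that $\exp(S_W)$ is symmetric positive definite and $\exp(S_B)$ is symmetric positive semidefinite (in fact positive definite), since $S_W$ and $S_B$ are symmetric so their exponentials are SPD by the spectral mapping. For any $V \in \mathbb{R}^{d\times t}$ with $V^TV=I$ (hence $\mathrm{rank}(V)=t$), Lemma~\ref{Lem3.4} applied with $A=\exp(S_B)$ and $B=\exp(S_W)$ gives
\begin{equation*}
\sum_{i=1}^t \lambda_{d-t+i}\big(\exp(S_W)^{-1}\exp(S_B)\big) \;\le\; {\rm tr}\!\big((V^T\exp(S_W)V)^{-1}(V^T\exp(S_B)V)\big) \;\le\; \sum_{i=1}^t \lambda_i\big(\exp(S_W)^{-1}\exp(S_B)\big).
\end{equation*}
Taking the maximum over all admissible $V$ on the middle term, the lower-bound inequality persists (the maximum is at least any fixed value in the range) and the upper-bound inequality persists (the sum on the right is independent of $V$). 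Thus
\begin{equation*}
\sum_{i=1}^t \lambda_{d-t+i}(M) \;\le\; \rho \;\le\; \sum_{i=1}^t \lambda_i(M),
\end{equation*}
where I have used the identity $\exp(S_W)^{-1}\exp(S_B)=\exp(-S_W)\exp(S_B)$ and the fact, noted just before Theorem~\ref{Thm3.3}, that $M$ has the same eigenvalues as $\exp(-S_W)\exp(S_B)$.

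Next I would apply Theorem~\ref{Thm3.3} termwise. For the upper bound on $\rho$, the bound $\lambda_i(M)\le \min\{\exp(\mu_i-\nu_d),\exp(\mu_1-\nu_{d-i+1})\}$ for each $i=1,\dots,t$ plugs directly into $\rho \le \sum_{i=1}^t \lambda_i(M)$ and delivers (\ref{eqn3202}). For the lower bound on $\rho$, I would apply Theorem~\ref{Thm3.3} with index shifted to $d-t+i$:
\begin{equation*}
\lambda_{d-t+i}(M) \;\ge\; \max\{\exp(\mu_{d-t+i}-\nu_1),\; \exp(\mu_d-\nu_{d-(d-t+i)+1})\} \;=\; \max\{\exp(\mu_{d-t+i}-\nu_1),\;\exp(\mu_d-\nu_{t-i+1})\}.
\end{equation*}
Summing over $i=1,\dots,t$ and combining with $\rho \ge \sum_{i=1}^t \lambda_{d-t+i}(M)$ yields (\ref{eqn320}).

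The argument is almost mechanical; the only place where care is needed is the index bookkeeping in the lower-bound step, specifically verifying that $d-(d-t+i)+1 = t-i+1$ so the appropriate index of $\nu$ appears. There is no real analytic obstacle, since Theorem~\ref{Thm3.3} already does the heavy lifting of relating spectra of $M$ to the spectra of $S_W$ and $S_B$, and Lemma~\ref{Lem3.4} reduces the Rayleigh-type trace ratio to partial sums of eigenvalues of $\exp(S_W)^{-1}\exp(S_B)$. No positivity subtleties arise because $\exp(S_W)$ is strictly positive definite regardless of whether $S_W$ is singular.
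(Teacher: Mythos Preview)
Your proposal is correct and follows essentially the same approach as the paper: apply Lemma~\ref{Lem3.4} with $A=\exp(S_B)$, $B=\exp(S_W)$ to sandwich $\rho$ between partial eigenvalue sums of $\exp(-S_W)\exp(S_B)$, then invoke the bounds (\ref{333}) and (\ref{444}) of Theorem~\ref{Thm3.3} termwise with the index shift $d-(d-t+i)+1=t-i+1$. The paper's own proof is a one-line citation of precisely these ingredients, so your write-up is actually more detailed than the original.
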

\begin{proof}
This result is from Lemma \ref{Lem3.4}, (\ref{333}), (\ref{444}), and the fact that for $i=1,2,\ldots,d$,
\begin{eqnarray*}
\lambda_{d-t+i}\big({\rm exp}(-S_W){\rm exp}(S_B)\big)\geq
\max\big\{{\rm exp}(\mu_{d-t+i}-\nu_1),{\rm exp}(\mu_d-\nu_{t-i+1})\big\}.
\end{eqnarray*}
\end{proof}

If $S_W$ is nonsingular, we have the following result on the LDA criterion (\ref{1.2}), whose proof is similar to that of Theorem \ref{Thm3.5}, and thus is omitted.
\begin{theorem}\label{Thm3.6}
If $S_W$ is nonsingular, let $\nu_1\geq\nu_2\geq\cdots\geq\nu_d>0$, $\mu_1\geq\mu_2\geq\cdots\geq\mu_d$ be the eigenvalues of $S_W$ and $S_B$, respectively, and let $\varrho$ be the LDA criterion defined in {\rm(}\ref{1.2}{\rm)}. Then we have that
\begin{equation}\label{eqn321}
\sum_{i=1}^t\max\left\{\frac{\mu_{d-t+i}}{\nu_1},\frac{\mu_d}{\nu_{t-i+1}}\right\}\leq\varrho\leq\sum_{i=1}^t\min\left\{\frac{\mu_i}{\nu_d},\frac{\mu_1}{\nu_{d-i+1}}\right\}.
\end{equation}
\end{theorem}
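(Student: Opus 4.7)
The plan is to mirror the two-step template behind the proof of Theorem \ref{Thm3.5}: first establish eigenvalue bounds on $S_W^{-1}S_B$ analogous to (\ref{333})--(\ref{444}), then apply Lemma \ref{Lem3.4} with $A=S_B$ and $B=S_W$ (legitimate because $S_W$ is SPD by hypothesis and $S_B$ is symmetric positive semidefinite by construction). Concretely, I would prove that for $i=1,\ldots,d$,
\begin{equation*}
\max\!\left\{\frac{\mu_i}{\nu_1},\,\frac{\mu_d}{\nu_{d-i+1}}\right\}\leq\lambda_i(S_W^{-1}S_B)\leq\min\!\left\{\frac{\mu_i}{\nu_d},\,\frac{\mu_1}{\nu_{d-i+1}}\right\}.
\end{equation*}
Once this is in hand, Lemma \ref{Lem3.4} and the index identity $d-(d-t+i)+1=t-i+1$ produce (\ref{eqn321}) in a single step.

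For the eigenvalue bounds themselves I would replay the Courant--Fischer computation from the proof of Theorem \ref{Thm3.3}. Let $\widetilde{W}=S_W^{-1/2}$ be the SPD square root (well defined since $S_W$ is SPD); then $\lambda_i(S_W^{-1}S_B)=\lambda_i(\widetilde{W}S_B\widetilde{W})$ and, following the same factorization used in (\ref{3.3}),
\begin{equation*}
\lambda_i(\widetilde{W}S_B\widetilde{W})=\max_{\dim(\mathcal{S})=i}\;\min_{v\in\mathcal{S},\,\|v\|_2=1}R_1(v)\cdot R_2(v),
\end{equation*}
where $R_1(v)=(\widetilde{W}v)^TS_B(\widetilde{W}v)/((\widetilde{W}v)^T(\widetilde{W}v))$ and $R_2(v)=v^TS_W^{-1}v/(v^Tv)$. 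The two factors satisfy $\mu_d\leq R_1(v)\leq\mu_1$ and $1/\nu_1\leq R_2(v)\leq 1/\nu_d$ pointwise. Moreover, since $\widetilde{W}$ is invertible, the substitution $w=\widetilde{W}v$ turns the max--min of $R_1$ over $i$-dimensional subspaces into the Courant--Fischer expression for $\mu_i=\lambda_i(S_B)$, while the max--min of $R_2$ equals $\lambda_i(S_W^{-1})=1/\nu_{d-i+1}$. Holding one factor at its pointwise extreme while applying max--min to the other then extracts all four one-sided bounds at once: for instance, $R_1R_2\geq R_1/\nu_1$ pointwise gives $\lambda_i(S_W^{-1}S_B)\geq\mu_i/\nu_1$, whereas $R_1R_2\geq\mu_d R_2$ pointwise gives $\lambda_i(S_W^{-1}S_B)\geq\mu_d/\nu_{d-i+1}$, and the two upper bounds are symmetric.

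The only conceptual wrinkle I anticipate is that in Theorem \ref{Thm3.3} the second pair of bounds is obtained via the symmetric rearrangement $\exp^{1/2}(S_B)\exp(-S_W)\exp^{1/2}(S_B)$, which is available because $\exp^{1/2}(S_B)$ is always invertible. In the LDA setting that route is blocked: $S_B$ is typically rank-deficient (indeed $\mathrm{rank}(S_B)\leq k-1$), so $S_B^{1/2}$ has a nontrivial kernel and the analogous similarity transformation is not well-behaved. The single-factorization argument sketched above sidesteps this entirely, so no rank hypothesis on $S_B$ is needed; once the four eigenvalue bounds are established, the finish is a verbatim transcription of the closing step of the proof of Theorem \ref{Thm3.5}.
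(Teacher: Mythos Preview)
Your proposal is correct and follows the route the paper itself indicates: the paper omits the proof of Theorem~\ref{Thm3.6} entirely, stating only that it is ``similar to that of Theorem~\ref{Thm3.5}'', i.e., eigenvalue bounds on $S_W^{-1}S_B$ in the style of Theorem~\ref{Thm3.3} combined with Lemma~\ref{Lem3.4}. Your execution of this plan is sound, and the index bookkeeping for the lower bound via $d-(d-t+i)+1=t-i+1$ is exactly right.

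One point worth highlighting is your handling of the second pair of bounds. In Theorem~\ref{Thm3.3} the paper obtains (\ref{3.8}) by passing to the alternative symmetrization $\exp^{1/2}(S_B)\exp(-S_W)\exp^{1/2}(S_B)$ and repeating the Courant--Fischer substitution $w=\exp^{1/2}(S_B)v$; this relies on the invertibility of $\exp^{1/2}(S_B)$. As you correctly note, the literal analogue with $S_B^{1/2}$ breaks down because $S_B$ is rank-deficient, so the substitution no longer gives a bijection between $i$-dimensional subspaces. Your workaround---staying with the single factorization $\widetilde{W}=S_W^{-1/2}$ and extracting the bounds $\mu_d/\nu_{d-i+1}$ and $\mu_1/\nu_{d-i+1}$ by holding $R_1$ at its pointwise extremes $\mu_d$, $\mu_1$ while applying Courant--Fischer to $R_2$---is clean and avoids the issue entirely. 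This is a genuine refinement over a na\"{\i}ve transcription of the Theorem~\ref{Thm3.3} proof, and since the paper omits the argument it does not address this subtlety at all.
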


Without loss of generality, we assume that the $n$ training vectors are linear independent, so that the ranks of the matrices $S_{B}$ and $S_{W}$ are $k-1$ and $n-k$, respectively. As $k\leq n\ll d$, there are many 1's in the spectra of ${\rm exp}(-S_W)$ and ${\rm exp}(S_B)$.
The following theorem gives the number of 1 eigenvalues of $M$.
\begin{theorem}\label{Thm3.7}
If the $n$ samples in $\mathcal{X}$ are linear independent,
then $M$ has at least $d-n+1$ eigenvalues that are equal to 1.
\end{theorem}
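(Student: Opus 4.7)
The plan is to exhibit a subspace of dimension $d-n+1$ on which $M$ acts as the identity; since $M$ is symmetric, this immediately gives at least $d-n+1$ eigenvalues equal to $1$. The natural candidate is $\ker(S_W)\cap\ker(S_B)$, because on this subspace both ${\rm exp}(S_B)$ and ${\rm exp}^{1/2}(-S_W)$ act as the identity.

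First I would verify that the common kernel has the right dimension. Let $S_T=\sum_{i=1}^n(\chi_i-\mu)(\chi_i-\mu)^T$ be the total scatter matrix. A routine expansion using $\sum_{\chi_i\in\mathcal{X}_j}(\chi_i-\mu_j)=0$ and $\mu=\frac{1}{n}\sum_j n_j\mu_j$ shows the classical decomposition $S_T=S_W+S_B$. Since $S_W$ and $S_B$ are symmetric positive semi-definite, for every $v\in\mathbb{R}^d$ we have $v^T(S_W+S_B)v=0$ if and only if $v^TS_Wv=0=v^TS_Bv$, which by positive semi-definiteness is equivalent to $S_Wv=0=S_Bv$. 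Therefore
\[
\ker(S_T)=\ker(S_W)\cap\ker(S_B).
\]

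Next I would compute $\dim\ker(S_T)$ under the linear-independence hypothesis. Because $\{\chi_1,\ldots,\chi_n\}$ are linearly independent and $\sum_i(\chi_i-\mu)=0$ is the unique linear relation among the centered samples, the columns of $[\chi_1-\mu,\ldots,\chi_n-\mu]$ span an $(n-1)$-dimensional subspace. Since ${\rm range}(S_T)$ equals the column span of this matrix, ${\rm rank}(S_T)=n-1$, hence $\dim\ker(S_T)=d-n+1$.

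Finally I would verify that $Mv=v$ for any $v\in\ker(S_W)\cap\ker(S_B)$. For such a $v$, the spectral representation (equivalently, the power-series definition of the matrix exponential) gives ${\rm exp}(S_B)v=v$ and ${\rm exp}^{1/2}(-S_W)v=v$, whence
\[
Mv={\rm exp}^{1/2}(-S_W){\rm exp}(S_B){\rm exp}^{1/2}(-S_W)v=v.
\]
Thus $\ker(S_W)\cap\ker(S_B)$ is contained in the eigenspace of $M$ for eigenvalue $1$, yielding at least $d-n+1$ such eigenvalues. There is no real obstacle here; the only step that requires care is the identity $\ker(S_W+S_B)=\ker(S_W)\cap\ker(S_B)$, which relies crucially on the positive semi-definiteness of both matrices.
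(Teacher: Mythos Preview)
Your proof is correct and takes a genuinely different route from the paper's. The paper computes ${\rm rank}(S_B)=k-1$ and ${\rm rank}(S_W)=n-k$ separately, so that $\dim\ker(S_B)=d-k+1$ and $\dim\ker(S_W)=d-n+k$, and then invokes the dimension formula $\dim(\Phi_1\cap\Phi_2)\geq\dim\Phi_1+\dim\Phi_2-d$ to conclude that the intersection of the two kernels has dimension at least $(d-k+1)+(d-n+k)-d=d-n+1$. You instead bring in the total scatter matrix $S_T=S_W+S_B$, use positive semi-definiteness to identify $\ker(S_T)=\ker(S_W)\cap\ker(S_B)$ in one stroke, and compute ${\rm rank}(S_T)=n-1$ directly from the single relation $\sum_i(\chi_i-\mu)=0$.

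Your argument is cleaner in two respects: it avoids the separate rank computations for $S_B$ and $S_W$ (which the paper states but does not justify under the linear-independence hypothesis), and it yields the \emph{exact} value $\dim(\ker(S_W)\cap\ker(S_B))=d-n+1$ rather than just a lower bound. The paper's approach, on the other hand, does not rely on the scatter decomposition $S_T=S_W+S_B$ and would transfer more readily to settings where such an additive identity is unavailable.
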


\begin{proof}
Since the $n$ samples are linear independent, there are $d-k+1$ and $d-n+k$ zero eigenvalues in the spectra of $S_B$ and $S_W$, respectively. As a result, there are $d-k+1$ and $d-n+k$ eigenvalues of ${\rm exp}(S_B)$ and ${\rm exp}(S_W)$, respectively, which are equal to 1.
Let $\{{\bf u}_i\}_{i=1}^{d-k+1}$ and $\{{\bf g}_i\}_{i=1}^{d-n+k}$ be the orthnormal eigenvectors of ${\rm exp}(S_W)$ and ${\rm exp}(S_B)$ corresponding to the eigenvalue 1, respectively, and denote by
$$
\Phi_1={\rm span}\{{\bf u}_1,\ldots,{\bf u}_{d-k+1}\},\quad\Phi_2={\rm span}\{{\bf g}_1,\ldots,{\bf g}_{d-n+k}\}
$$
the corresponding eigenspace (or invariant subspace) of ${\rm exp}(S_B)$ and ${\rm exp}(S_W)$, respectively.
Let ${\rm dim}(\Phi_i)$ be the dimension of the space $\Phi_i~(i=1,2)$, from the the well-know dimension formula \cite[pp.2]{WWJ}
$$
d\geq{\rm dim}(\Phi_1+\Phi2)={\rm dim}(\Phi_1)+{\rm dim}(\Phi_2)-{\rm dim}(\Phi_1\cap\Phi_2),
$$
we obtain
$$
{\rm dim}(\Phi_1\cap\Phi_2)\geq d-n+1.
$$
Therefore, there are at least $d-n+1$ independent vectors $\{{\bf z}\}_{i=1}^{d-n+1}\in\Omega_1\cap\Omega_2$, such that
$$
{\rm exp}(S_W){\bf z}_i={\bf z}_i\quad{\rm and}\quad {\rm exp}(S_B){\bf z}_i={\bf z}_i.
$$
That is,
$$
{\rm exp}(-S_W){\rm exp}(S_B){\bf z}_i={\bf z}_i,\quad i=1,2,\ldots,d-n+1,
$$
and $M$ has at least $d-n+1$ eigenvalues that are equal to 1.
\end{proof}

\begin{rem}
It was assumed in \cite[pp.189]{ZF} that if
\begin{equation}\label{eqn322}
\frac{{\rm exp}(\mu_i)}{{\rm exp}(\nu_i)}>\frac{\mu_i}{\nu_i},\quad i=1,2,\ldots,t,
\end{equation}
then there is a difference in diffusion scale between the
within-class and between-class distances, and the diffusion scale to
the between-class distance is larger than the within-class distance.
Hence, the margin between different classes is enlarged, which is helpful in improving classification
accuracy.
Theorems \ref{Thm3.5}--\ref{Thm3.7} show this more precisely. It indicates that if 
\begin{equation}\label{eqn323}
\frac{{\rm exp}(\mu_i)}{{\rm exp}(\nu_d)}>\frac{\mu_i}{\nu_d}
\quad or\quad
\frac{{\rm exp}(\mu_1)}{{\rm exp}(\nu_{d-i+1})}>\frac{\mu_1}{\nu_{d-i+1}}, i=1,2,\ldots,t,
\end{equation}
then
$\rho$ can be larger than $\varrho$. Notice that the conditions in {\rm(}\ref{eqn323}{\rm)} are {\rm (}much{\rm)} weaker than those in {\rm(}\ref{eqn322}{\rm)}.
So our result is stronger than the one given in \cite{ZF}, and can be useful to clarify the motivation of EDA.
\end{rem}

\subsection{Solving the matrix exponential eigenproblems approximately}

In the new strategy, one needs to solve the large matrix exponential eigenproblems (\ref{26}) or (\ref{3.1}) in a prescribed accuracy. If the desired accuracy is high (say, $tol=10^{-8}$ or even the machine precision $\epsilon\approx 2.22\times 10^{-16}$), then the cost for solving the large eigenproblems will be very high. In this subsection, we give theoretical analysis on the relationship between the accuracy of the eigenvectors and the distance measure of the nearest neighbour classifier (NN) \cite{near}. Based on the theoretical results, we provide a practical stopping criterion for the matrix exponential eigenproblems.

K nearest neighbors (KNN) is a simple algorithm that stores all available cases and classifies new cases based on a similarity measure \cite{KNN}, e.g., the Euclidean distance. This algorithm has been used in statistical estimation and pattern recognition in the beginning of 1970's as a non-parametric technique.
In KNN classification, the output is a class membership. An object is classified by a majority vote of its neighbors, with the object being assigned to the class most common among its K nearest neighbors (where K is a positive integer, typically small).
If $\text{K}=1$, then the case is simply assigned to the class of its nearest neighbor (NN) \cite{near}.

Denote by $V,\widetilde{V}\in\mathbb{R}^{d\times\ell}$ the orthonormal matrices whose columns are the ``exact" and ``computed" solutions of {\rm(}\ref{26}{\rm)} or {\rm(}\ref{3.1}{\rm)}, respectively. Let $\widehat{\bf x}_{i}$ be a sample from the training set, and $\widehat{\bf y}_{j}$ be a sample from the testing set. Then the nearest neighbour classifier gives class membership via investigating the Euclidean distance \cite{near}
\begin{equation}
d_{ij}=\|VV^T(\widehat{\bf x}_{i}-\widehat{\bf y}_{j})\|_2=\|V^T(\widehat{\bf x}_{i}-\widehat{\bf y}_{j})\|_2,
\end{equation}
where $\|\cdot\|_2$ denotes 2-norm (or Euclidean norm) of a vector or matrix. The following theorem sheds light on why we can solve the matrix exponential eigenproblems approximately.

\begin{theorem}\label{Thm3.8}
Let $V,\widetilde{V}\in\mathbb{R}^{d\times t}$ be orthonormal matrices whose columns are the ``exact" and ``computed" solutions of {\rm(}\ref{26}{\rm)} or {\rm(}\ref{3.1}{\rm)}, respectively.
Denote by $d_{ij}=\|V^T(\widehat{\bf x}_{i}-\widehat{\bf y}_{j})\|_2$ and $\widetilde{d}_{ij}=\|\widetilde{V}^T(\widehat{\bf x}_{i}-\widehat{\bf y}_{j})\|_2$ the ``exact" and ``computed" Euclidean distances, respectively, and let $\sin\angle(V,\widetilde{V})=\|(I-VV^T)\widetilde{V}\|_2$ be the distance between the eigenspace ${\rm span}\{V\}$ and the approximate eigenspace ${\rm span}\{\widetilde{V}\}$. If $\|\widehat{\bf x}_i\|_2,\|\widehat{\bf y}_j\|_2=1$ and $\cos\angle(V,\widetilde{V})\neq 0$, then
\begin{equation}\label{eqn324}
\frac{\widetilde{d}_{ij}-2\sin\angle(V,\widetilde{V})}{\cos\angle(V,\widetilde{V})}\leq d_{ij}\leq\widetilde{d}_{ij}\cos\angle(V,\widetilde{V})+2\sin\angle(V,\widetilde{V}).
\end{equation}
\end{theorem}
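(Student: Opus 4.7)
The plan is to derive each of the two inequalities in (\ref{eqn324}) by decomposing the vector $z := \widehat{\bf x}_i - \widehat{\bf y}_j$ with respect to the two orthogonal projectors $VV^T$ and $\widetilde{V}\widetilde{V}^T$, and then applying the triangle inequality together with submultiplicativity of the spectral norm. Beyond these two manipulations, only two small ingredients are needed: the bound $\|z\|_2 \leq \|\widehat{\bf x}_i\|_2 + \|\widehat{\bf y}_j\|_2 = 2$ coming from the unit-norm hypothesis, and the standard symmetric identity $\|(I - VV^T)\widetilde{V}\|_2 = \|(I - \widetilde{V}\widetilde{V}^T)V\|_2$, valid for orthonormal $V$ and $\widetilde{V}$ with the same number of columns.

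For the upper bound on $d_{ij}$, I would write
$$V^T z = V^T \widetilde{V}\widetilde{V}^T z + V^T(I - \widetilde{V}\widetilde{V}^T) z,$$
take 2-norms and use submultiplicativity to get $d_{ij} \leq \|V^T\widetilde{V}\|_2\, \widetilde{d}_{ij} + \|V^T(I - \widetilde{V}\widetilde{V}^T)\|_2\, \|z\|_2$. Identifying $\|V^T\widetilde{V}\|_2$ with $\cos\angle(V,\widetilde{V})$, recognizing $\|V^T(I - \widetilde{V}\widetilde{V}^T)\|_2 = \|(I - \widetilde{V}\widetilde{V}^T)V\|_2 = \sin\angle(V,\widetilde{V})$ via the symmetric identity, and using $\|z\|_2 \leq 2$, yields the upper half of (\ref{eqn324}) immediately. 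For the lower bound I would perform the analogous decomposition
$$\widetilde{V}^T z = \widetilde{V}^T V V^T z + \widetilde{V}^T(I - VV^T) z,$$
run the same estimate to obtain $\widetilde{d}_{ij} \leq \cos\angle(V,\widetilde{V})\, d_{ij} + 2\sin\angle(V,\widetilde{V})$, and then solve for $d_{ij}$; this last rearrangement is precisely where the hypothesis $\cos\angle(V,\widetilde{V}) \neq 0$ is used, since it appears as a divisor.

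The algebra itself is mechanical. The only point requiring any care, and hence the main (mild) obstacle, is the consistent identification of the scalars $\cos\angle(V,\widetilde{V}) = \|V^T\widetilde{V}\|_2$ and $\sin\angle(V,\widetilde{V}) = \|(I - VV^T)\widetilde{V}\|_2 = \|(I - \widetilde{V}\widetilde{V}^T)V\|_2$, together with the symmetry of the latter; this is standard and can be justified via the CS decomposition of the pair $(V,\widetilde{V})$. No information about the underlying eigenproblems (\ref{26}) or (\ref{3.1}), or about the matrix exponentials ${\rm exp}(-S_W)$ and ${\rm exp}(S_B)$, enters the argument at all. The estimate is therefore a pure subspace-perturbation fact, which is exactly why it can be used to argue that the inner eigensolver may be stopped as soon as $\sin\angle(V,\widetilde{V})$ is moderately small without materially altering the NN distances used for classification.
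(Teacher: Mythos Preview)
Your proposal is correct and follows essentially the same route as the paper's proof: decompose $V^T z$ through the projector $\widetilde{V}\widetilde{V}^T$, apply the triangle inequality and submultiplicativity, identify $\|V^T\widetilde{V}\|_2=\cos\angle(V,\widetilde{V})$ and $\|V^T(I-\widetilde{V}\widetilde{V}^T)\|_2=\sin\angle(V,\widetilde{V})$, use $\|z\|_2\le 2$, and then swap the roles of $V$ and $\widetilde{V}$ to obtain the second inequality before dividing by $\cos\angle(V,\widetilde{V})$. Your explicit mention of the symmetric identity for the sine and of where the nonzero-cosine hypothesis enters are points the paper leaves implicit, but otherwise the arguments coincide.
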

\begin{proof}
Note that
$V=\widetilde{V}\widetilde{V}^{T}V+(I-\widetilde{V}\widetilde{V}^{T})V$,
and $\cos\angle(V,\widetilde{V})=\|V^{T}\widetilde{V}\|_{2}$ \cite{Stewart}.
Thus,
\begin{eqnarray*}
d_{ij}&=& \|V^{T}(\widehat{\bf x}_{i}-\widehat{\bf y}_{j})\|_{2}\\
&=& \|(V^{T}\widetilde{V})\widetilde{V}^{T}(\widehat{\bf x}_{i}-\widehat{\bf y}_{j})+V^{T}(I-\widetilde{V}\widetilde{V}^{T})(\widehat{\bf x}_{i}-\widehat{\bf y}_{j})\|_{2}\\
&\leq&\widetilde{d}_{ij}\|V^{T}\widetilde{V}\|_{2}+\|V^{T}(I-\widetilde{V}\widetilde{V}^{T})\|_{2}\cdot\|\widehat{\bf x}_{i}-\widehat{\bf y}_{j}\|_{2}\\
&=&\widetilde{d}_{ij}\cdot\cos\angle(V,\widetilde{V})+\sin\angle(V,\widetilde{V})\cdot\|\widehat{\bf x}_{i}-\widehat{\bf y}_{j}\|_{2}\\
&\leq&\widetilde{d}_{ij}\cos\angle(V,\widetilde{V})+2\sin\angle(V,\widetilde{V}).
\end{eqnarray*}
where we used $\|\widehat{\bf x}_{i}-\widehat{\bf y}_{j}\|_{2}\leq\|\widehat{\bf x}_{i}\|_2+\|\widehat{\bf y}_{j}\|_{2}\leq 2$. Using the same trick, we can prove that
$$
\widetilde{d}_{ij}\leq{d}_{ij}\cos\angle(V,\widetilde{V})+2\sin\angle(V,\widetilde{V}),
$$
a combination of the above two inequalities gives (\ref{eqn324}).
\end{proof}
\begin{rem}\label{Rem3.4}
Theorem \ref{Thm3.8} indicates that if $\sin\angle(V,\widetilde{V})$ is small {\rm(}say, $10^{-4}${\rm)}, then the
$\{d_{ij}\}'s$ and the $\{\widetilde{d}_{ij}\}'s$ will be close to each other. Thus, there is no need to compute the eigenvectors too accurately in practice. This explains why the matrix exponential eigenproblems {\rm(}\ref{26}{\rm)} and {\rm(}\ref{3.1}{\rm)} can be solved approximately in practice.

However, we cannot get the value of $\sin\angle(V,\widetilde{V})$ a priori, as the ``exact" eigenspace ${\rm span}\{V\}$ is unavailable. Let $(\lambda_1,{\bf x}_1),\ldots,(\lambda_t,{\bf x}_t)$ be the desired eigenpairs,
it was shown that if the minimal distance between the Ritz values $\widetilde{\lambda}_1,\widetilde{\lambda}_2,\ldots,\widetilde{\lambda}_t$ and the other eigenvalues {\rm(}i.e., the eigenvalues other than $\widetilde{\lambda}_1,\widetilde{\lambda}_2\ldots,\widetilde{\lambda}_t${\rm)} is sufficiently large, then $\sin\angle(V,\widetilde{V})$ is proportional to the residual norms of Ritz pairs \cite{Jia,Stewart}.
Therefore, we can use the largest residual norm of the Ritz pairs to take the place of $\sin\angle(V,\widetilde{V})$ in practice.
Experimentally, we find that a tolerance of $tol=10^{-4}$ is good enough for the matrix exponential eigenproblems. 
\end{rem}

In summary, we propose the main algorithm of this work for solving the large generalized matrix exponential eigenproblems. 


\begin{algorithm}\label{Alg3} {\bf Inexact Krylov-EDA algorithms for matrix exponential discriminant analysis}\\
Steps 1--4 are the same as those of Algorithm \ref{Alg2}.\\
5.~Given a convergence threshold $tol$ {\rm(}e.g., $tol=10^{-4}${\rm)}, compute the desired eigenvectors
by using a restarted Krylov subspace algorithm {\rm(}e.g. \cite{BCR2,Sorensen}{\rm)} for solving {\rm(}\ref{26}{\rm)} {\rm(}by using the Arnoldi method{\rm)} or {\rm(}\ref{3.2}{\rm)} {\rm(}by using the Lanczos method{\rm)}, where we use steps 6A--10A or 6L--12L in Algorithm \ref{Alg2} for
matrix exponential-vector products.\\
6.~Orthogonalize the columns of the projection matrix $V$.
\end{algorithm}

\begin{rem}
Our new methods fall within the class of the inexact Krylov subspace methods. However, the new method is different from the
inner-outer Krylov subspace methods with inexact matrix-vector products \cite{BF2,BF1,Simoncini,Simoncini2}. In those methods, the tolerance of the inner iterative process {\rm(}i.e., approximating the
matrix exponential action{\rm)} can be significantly relaxed
once the outer process {\rm(}i.e, the Arnoldi/Lanczos
eigenvalue solver{\rm)} has started to converge. However, in our new methods the matrix exponential-vector products are computed ``exactly" based on some closed-form formulae. Further, the tolerance for Arnoldi/Lanczos
eigenvalue solver in Algorithm \ref{Alg3} is determined in advance.
\end{rem}

\section{Numerical Experiments}\label{sec4}
\setcounter{equation}{0}

In this section, we make some numerical experiments to illustrate the efficiency of Algorithm 3 for face recognition.
All the numerical experiments were run
on a Dell PC with eight core Intel(R) Core(TM)i7-2600 processor with CPU
3.40 GHz and RAM 16.0 GB, under the Windows 7 with 64-bit operating system. All the numerical results were obtained from using a MATLAB 7.10.0 implementation.
In all the algorithms for comparison, the projection matrix $V$ is composed of the $k-1$ (i.e., $t=k-1$) dominant discriminant vectors, where $k$ is
the number of classes. We apply the nearest neighbor (NN) \cite{near} as the classifier with the $L_2$ metric as distance
measure.
Each column ${\bf x}_i$ of the data matrices is scaled by its 2-norm,
and 10 random splits are run so that one can obtain
a stable recognition result. In all the EDA-based algorithms, the columns of $V$ is orthonomalized by using the MATLAB built-in function {\tt orth.m}, where the {\tt orth} operation is performed via a QR decomposition, i.e., stabilized Gram-Schmidt.


For the performance of the EDA-based methods and how they compare with the current state-of-the-art, we refer to \cite{DB,WCP,Wang,YPan,ZF}. Indeed,
if the large matrix exponential eigenproblems (\ref{2.02}) and (\ref{3.2}) were solved ``exactly", then our new methods are mathematically equivalent to the original EDA method proposed in \cite{ZF}.
The main aim of this section is to show {\mbox Algorithm 3} outperforms {\mbox Algorithm 1} according to CPU time, while the classification accuracy of the former is comparable to that of the latter.

The different EDA-based algorithms are listed as follows:\\
$\bullet$ {\tt (Inexact) Arnoldi-EDA} (Algorithm 3): We apply Algorithm 3 to solve the large nonsymmetric matrix exponential eigenproblem (\ref{26}),
in which the implicitly restarted {\it Arnoldi} algorithm \cite{Sorensen} (the MATLAB built-in function {\tt eigs.m}) is used.
The matrix exponential-vector products are computed by using Algorithm \ref{Alg2}.
The stopping criterion for the large eigenproblem is chosen as $tol=10^{-4}$.\\
$\bullet$ {\tt (Inexact) Lanczos-EDA} (Algorithm 3): We apply Algorithm 3 for solving the large symmetric matrix exponential eigenproblem (\ref{3.2}),
in which the implicitly restarted {\it Lanczos} algorithm \cite{Sorensen} (the MATLAB built-in function {\tt eigs.m}) is used.
The matrix exponential-vector products are computed by using Algorithm \ref{Alg2}. The stopping criterion for the large eigenproblem is chosen as $tol=10^{-4}$.\\
$\bullet$ {\tt EDA-eigs}: Solving the generalized symmetric matrix exponential eigenproblem (\ref{2.02}) by using the MATLAB built-in function {\tt eigs.m}, the convergence threshold is chosen as $tol=10^{-8}$. This mimics forming the two matrix exponentials ${\rm exp}(S_B)$ and ${\rm exp}(S_W)$ explicitly while solving the matrix exponential eigenproblem iteratively with a relatively higher accuracy. The matrix exponentials are evaluated by using the MATLAB built-in function {\tt expm.m}. \\
$\bullet$ {\tt EDA} (Algorithm 1): The EDA algorithm advocated in \cite{ZF}, in which we form ${\rm exp}(-S_W){\rm exp}(S_B)$ and solve the nonsymmetric matrix exponential eigenproblem (\ref{26}) by using the implicit QR algorithm \cite{Bai,GV} (the MATLAB built-in function {\tt eig.m}). The matrix exponentials are evaluated by using the MATLAB built-in function {\tt expm.m}.


{\bf Example 4.1}~~In this example, we compare the EDA-based algorithms with {LDA$+$PCA} \cite{BHK} and the classical LDA method \cite{DHS,RAF}, and show the efficiency of the exponential discriminant analysis method.
A subset of AR database \cite{Mar} is used here with 1680 face images from $120$ persons (14 images per person), and all images are
cropped and scaled to
$50\times 40$. Figure 4.1 presents the sample of cropped AR database images of three individuals.
A random subset with $\ell=2,3,5$ images per subject is
taken to form the training set, and the rest of the images are
used as the testing set. We run {Arnoldi-EDA}, {Lanczos-EDA}, {EDA-eigs}, {EDA} and LDA+PCA \cite{BHK} on this problem.
As a by-product, we also list the numerical results of
the ``Classical LDA" method \cite{DHS,RAF}, i.e.,
the classical LDA method with
the QZ algorithm \cite{GV} for the generalized eigenproblem (\ref{1.3}) (by using the MATLAB built-in function {\tt eig.m}).
As was done in \cite{ZF}, for LDA$+$PCA, we reserve $99\%$ energy in the PCA stage, followed by LDA.
Table 4.1 lists the CPU time (in seconds) and the recognition accuracy obtained from the different methods. When $\ell=3$, we plot in Figure 4.2 recognition accuracy of the six algorithms with respect to the projected dimension on the AR database. Notice that the curves of {Arnoldi-EDA}, {EDA-eigs}, and {EDA} overlap with each other.


\begin{figure}
{\small {\bf Figure 4.1,~Example 4.1}:~Sample face images of 3 individuals from
the AR database, $d=50\times 40$.}
\end{figure}

\begin{figure}
\begin{center}
\scalebox{0.5}{\includegraphics{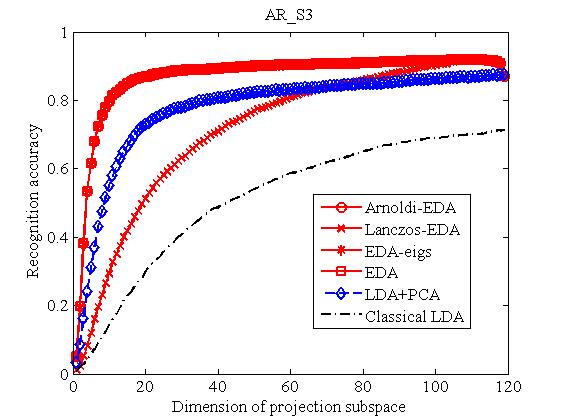}}\\
\end{center}
{\small {\bf Figure 4.2, Example 4.1}:~Recognition accuracy with respect to the projected dimension on the AR database, 3 Train.}
\end{figure}

{\small
\begin{table}[!h]
\begin{center}
\def\temptablewidth{0.7\textwidth}
{\rule{\temptablewidth}{0.8pt}}
\begin{tabular*}{\temptablewidth}{@{\extracolsep{\fill}}lccccr}
{\bf Algorithm}  &{\bf 2 Train} &{\bf 3 Train} & {\bf 5 Train}    \\\hline
Arnoldi-EDA & 0.51(84.3\%) &0.71(92.0\%) &1.28 (97.0\%)   \\
Lanczos-EDA & 0.55(84.6\%) &0.80(92.2\%) &1.58 (97.3\%)   \\
EDA-eigs    &6.15(84.3\%) &6.69(92.0\%) &7.35 (97.0\%)   \\
EDA     &9.61(84.3\%) &9.75(92.0\%) &10.0 (97.0\%)   \\\hline
LDA+PCA     &0.09(82.6\%) &0.21(87.7\%) &0.46 (94.8\%)\\
Classical LDA   &51.4(65.0\%) &52.1(71.3\%) &52.0 (78.0\%)   \\
 \end{tabular*}
 {\rule{\temptablewidth}{1pt}}\\
 \end{center}
 \begin{flushleft}
  {\small {\rm {\bf Table 4.1, Example 4.1}:~CPU time in seconds and recognition accuracy (in brackets) of the six algorithms on the AR database, $d=50\times 40,~k=120$. }}
 \end{flushleft}
 \end{table}
}

We see from Table 4.1 that the recognition accuracy obtained from the four EDA-based algorithms are about the same, which are (much) higher
than those from LDA+PCA and classical LDA. Furthermore, both {Arnoldi-EDA} and {Lanczos-EDA} converge much faster than {EDA-eigs} and {EDA}, while {EDA-eigs} outperforms {EDA}.
The classical LDA method works the poorest in terms of CPU time and recognition accuracy. This is due to the fact that the dimension $d=2000>n=1680$, and we suffer from the SSS problem. Recall that the classical LDA method cannot cure this drawback properly.

{\bf Example 4.2}~~The aim of this example is two-fold. First, we show that our new algorithms {Arnoldi-EDA} and {Lanczos-EDA} run much faster than EDA
and EDA-eigs for face recognition. Second, we illustrate the effectiveness of Theorem \ref{Thm3.7}. There are two data sets in this example.
The first one is the Yale face database taken from
the Yale Center for Computational Vision and Control.
It contains $165$ grayscale images of $k=15$ individuals. The images
demonstrate variation with the following expressions or configurations:
1) lighting: center light, left light, and right light;
2) with/without glasses; and 3) facial expressions: normal,
happy, sad, sleepy, surprised, and winking. The original image
size is $320\times 243$ pixels. Figure 4.3 gives the sample of cropped Yale database images of three individuals.

The second test set is the Extended YaleB database. 
This database contains 5760 single light source images of 10 subjects each seen under 576 viewing conditions (9 different poses and 64 illumination conditions of each person). The images have normal, sleepy, sad and surprising expressions. There are some images with or without glasses. These images are captured by varying the position of light source at the center, left or right. For every subject in a particular pose, an image with ambient (background) illumination was also captured.
A subset of $k=38$ with 2432 images are used in this example (64 images of per individual with illumination). Figure 4.4 shows the sample of cropped Extended YaleB database images of three individuals.

\begin{figure}
{\small {\bf Figure 4.3,~Example 4.2}:~Sample face images of 3 individuals from
the Yale database, $d=64\times 64$.}
\end{figure}

\begin{figure}
{\small {\bf Figure 4.4,~Example 4.2}:~Sample face images of 3 individuals from
the Extended YaleB database, $d=64\times 64$.}
\end{figure}

In this example, all images are
aligned based on eye coordinates and are cropped and scaled to
$32\times 32,64\times 64$ and $100\times 100$, respectively.
A random subset with $\ell=3,5,8$ images per subject is
taken to form the training set, and the rest of the images are
used as the testing set. Tables 4.2 and 4.3 report the CPU time (in seconds) and the recognition accuracy of the different methods.

\begin{figure}
\begin{center}
\scalebox{0.5}{\includegraphics{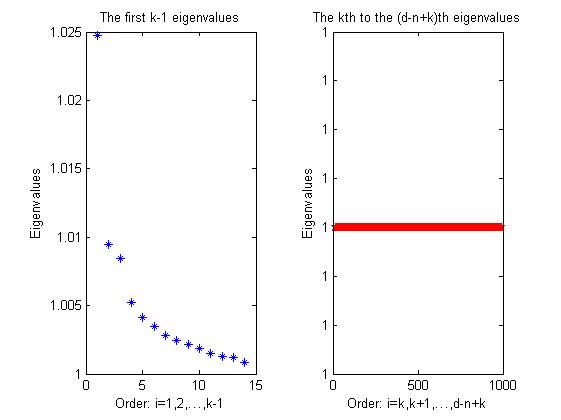}}\\
\end{center}
{\small {\bf Figure 4.5, Example 4.2}:~The first $k-1$ eigenvalues (Left) and the $k$-th to the $(d-n+k)$-th eigenvalues (Right) of ${\rm exp}(-S_W){\rm exp}(S_B)$, Yale data base,~$d=32\times 32,~k=15$; 3 Train.}
\end{figure}

Three remarks are in order. First, our new algorithms {Arnoldi-EDA} and {Lanczos-EDA} outperform EDA-eigs and EDA considerably in terms of CPU time, while the recognition accuracy of the four algorithms are about the same.
For example, we see from Table 4.2 that, when $d=100\times 100$ and $\ell=3$, {Arnoldi-EDA} and {Lanczos-EDA} used 0.12 and 0.11 seconds, while {EDA-eigs} and {EDA} used 409.4 and 1027.4 seconds, respectively. Thus, both the {Arnoldi-EDA} and the {Lanczos-EDA} algorithms can circumvent the drawback of heavily computational complexity that bothers the original EDA algorithm, while keep comparable recognition accuracy. Second, cropping the original images may lose some useful information and thus may result in a low recognition accuracy. For instance, for the Yale database, when $\ell=5$, the recognition accuracy of {Arnoldi-EDA} applying to $d=100\times 100$ images is $93.8\%$, while the recognition accuracy is only $74.7\%$ as $d=32\times 32$. Third, it is seen from Tables 4.2--4.3 that we cannot tell which one, {Arnoldi-EDA} or {Lanczos-EDA}, is {\it definitely better} than the other in terms of CPU time and recognition accuracy.

In order to show effectiveness of Theorem \ref{Thm3.7}, we plot in Figure 4.5
the first $k-1$ eigenvalues and the $k$-th to the $(d-n+k)$-th eigenvalues of the matrix ${\rm exp}(-S_W){\rm exp}(S_B)$ as $d=32\times 32$.
One observes that the 4-th to the $(k-1)$-th eigenvalues are clustered, and there are at least $d-n+1$ eigenvalues equal to 1. All these coincide with the theoretical results given in
Theorem \ref{Thm3.7}.

{\small
\begin{table}[!h]
\begin{center}
\def\temptablewidth{0.9\textwidth}
{\rule{\temptablewidth}{0.8pt}}
\begin{tabular*}{\temptablewidth}{@{\extracolsep{\fill}}lccccr}
{\bf Algorithm}   &$d$  &{\bf 3 Train}       &{\bf 5 Train} &{\bf 8 Train}    \\\hline
Arnoldi-EDA  &$32\times 32$  &0.04(65.3\%) &0.04(74.7\%) &0.06(83.1\%)   \\
Lanczos-EDA  &$32\times 32$  &0.02(65.2\%) &0.02(73.8\%) &0.04(83.1\%)   \\
EDA-eigs     &$32\times 32$  &0.76(65.8\%) &0.79(74.7\%) &0.78(83.1\%)   \\
EDA      &$32\times 32$  &1.24(65.8\%) &1.23(74.7\%) &1.24(83.1\%)   \\\hline
Arnoldi-EDA  &$64\times 64$  &0.06(74.0\%) &0.08(83.8\%) &0.11(90.2\%) \\
Lanczos-EDA  &$64\times 64$  &0.04(72.8\%) &0.07(84.9\%) &0.11(90.2\%)   \\
EDA-eigs     &$64\times 64$  &29.7(74.8\%) &31.1(84.7\%) &31.4(89.8\%)   \\
EDA      &$64\times 64$  &73.0(74.8\%) &74.7(84.7\%) &75.1(89.8\%)   \\\hline
Arnoldi-EDA  &$100\times 100$  &0.12(88.3\%) &0.15(93.8\%) &0.25(96.4\%)\\
Lanczos-EDA &$100\times 100$  &0.11(88.5\%) &0.16(95.3\%) &0.31(96.0\%)   \\
EDA-eigs     &$100\times 100$  &409.4(88.0\%) &397.1(94.8\%) &401.4(96.9\%)   \\
EDA      &$100\times 100$  &1027.4(88.0\%) &993.5(94.9\%) &1009.2(96.9\%)   \\
 \end{tabular*}
 {\rule{\temptablewidth}{1pt}}\\
 \end{center}
 \begin{flushleft}
  {\small {\rm {\bf Table 4.2, Example 4.2}:~CPU time in seconds and recognition accuracy (in brackets) of the four algorithms on the Yale database, $k=15$.}}
 \end{flushleft}
 \end{table}
}

{\small
\begin{table}[!h]
\begin{center}
\def\temptablewidth{0.9\textwidth}
{\rule{\temptablewidth}{0.8pt}}
\begin{tabular*}{\temptablewidth}{@{\extracolsep{\fill}}lccccr}
{\bf Algorithm}   &$d$  &{\bf 3 Train}  &{\bf 5 Train} & {\bf 8 Train}    \\\hline
Arnoldi-EDA  & $32\times 32$  & 0.07(55.9\%) &0.11(73.0\%) & 0.16(82.8\%)   \\
Lanczos-EDA  & $32\times 32$  & 0.06(59.5\%) &0.11(71.2\%) & 0.15(82.9\%)   \\
EDA-eigs     & $32\times 32$  & 1.02(56.6\%) &1.16(73.2\%) & 1.28(83.7\%)   \\
EDA      & $32\times 32$  & 1.28(56.6\%) &1.34(73.2\%) & 1.35(83.7\%)   \\\hline
Arnoldi-EDA  & $64\times 64$  & 0.23(56.3\%) &0.50(72.9\%) & 0.78(83.2\%) \\
Lanczos-EDA  & $64\times 64$  & 0.23(59.4\%) &0.58(71.1\%) & 0.99(83.0\%)   \\
EDA-eigs     & $64\times 64$  & 36.1(56.9\%) &37.7(73.4\%) & 41.2(84.1\%)   \\
EDA      & $64\times 64$  & 78.0(56.9\%) &78.4(73.4\%) & 81.4(84.1\%)   \\\hline
Arnoldi-EDA  & $100\times 100$  & 0.60(56.6\%) &1.46(73.9\%) & 1.90(83.2\%)\\
Lanczos-EDA & $100\times 100$  & 0.74(59.4\%) &1.88(71.4\%) &2.46(83.3\%)   \\
EDA-eigs     & $100\times 100$  & 421.7(57.3\%) &444.8(73.7\%) &541.8(84.2\%)   \\
EDA      & $100\times 100$  & 1013.2(57.3\%) &1030.0(73.7\%) &1129.7(84.2\%)   \\
 \end{tabular*}
 {\rule{\temptablewidth}{1pt}}\\
 \end{center}
 \begin{flushleft}
  {\small {\rm {\bf Table 4.3, Example 4.2}:~CPU time in seconds and recognition accuracy (in brackets) of the four algorithms on the Extended YaleB database, $k=38$.}}
 \end{flushleft}
 \end{table}
}

{\bf Example 4.3}~~In this example, we show efficiency of our ``inexact" strategy (see Theorem \ref{Thm3.8} and Remark \ref{Rem3.4}) for solving the large matrix exponential eigenproblems (\ref{26}) and (\ref{3.2}).
The FERET database is one of the standard image database specially used for the face recognition algorithms \cite{FERET}. The final corpus consists of 14051 eight-bit grayscale images of human heads with views ranging from frontal to left and right profiles. In this example, we consider a subset of 1400 images of $k=200$ individuals, in which each person contributing seven images. The seven images of each individual consists of different illumination and expression variation.
Before experiment, the facial images are cropped to a size of $80\times 80$ pixels. Figure 4.6 lists the sample of cropped FERET database images of three individuals. A random subset with $\ell=2,3,5$ images per subject is
taken to form the training set, and the rest of the images are
used as the testing set. In this example, we have to compute $k-1=199$ dominant eigenpairs of a $6400\times 6400$ matrix using {Arnoldi-EDA} and {Lanczos-EDA}, which is a challenging task.

In order to show efficiency of our inexact strategy, we run the (implicitly restarted) Arnoldi and Lanczos algorithm in Step 5 of Algorithm 3 with convergence tolerance $tol=10^{-2},10^{-4},10^{-6},10^{-8}$ and $10^{-10}$, respectively. Table 4.4 presents the numerical results.
We observe that the two inexact algorithms work quite well in all the cases, and a tolerance of ${\rm tol}=10^{-4}$ is good enough. Indeed, as is shown by Theorem \ref{Thm3.8}, it is unnecessary to solve the large exponential eigenproblems in a very high accuracy.
This is favorable to very large matrix computations arising from high dimensionality reduction.

It is seen that the recognition rates of {Lanczos-EDA} are lower than those of {Arnoldi-EDA} for this problem. A possible reason is that the approximation {\bf y} from {Lanczos-EDA} undergoes a exponential transformation to get the desired solution {\bf x}, see (\ref{eqn311}). As a comparison, we also list the numerical results obtained from running EDA and LDA+PCA. One observes that our new algorithms run much faster than EDA, and are superior to LDA+PCA in terms of recognition accuracy.

\begin{figure}
\begin{flushleft}
{\small {\bf Figure 4.6, Example 4.3 }:~Sample face images of 3 individuals from
the FERET database, $d=80\times 80$.}
\end{flushleft}
\end{figure}

{\small
\begin{table}[!h]
\begin{center}
\def\temptablewidth{0.9\textwidth}
{\rule{\temptablewidth}{0.8pt}}
\begin{tabular*}{\temptablewidth}{@{\extracolsep{\fill}}lccccr}
{\bf Algorithm}   &$tol$  &{\bf 3 Train}       &{\bf 5 Train} & {\bf 8 Train}    \\\hline
Arnoldi-EDA & $10^{-2}$  & 5.22(62.3\%) &6.69(73.4\%) & 10.2 (81.2\%)   \\
             & $10^{-4}$  & 5.22(62.3\%) &6.68(73.3\%) & 10.3 (81.2\%)   \\
             & $10^{-6}$  & 5.22(62.3\%) &10.4(73.4\%) & 21.6 (81.2\%)   \\
             & $10^{-8}$  & 5.27(62.4\%) &11.4(73.4\%) & 24.6 (81.3\%)   \\
             & $10^{-10}$  & 5.27(62.3\%) &12.0(73.4\%) &26.8 (81.3\%)   \\\hline
Lanczos-EDA  & $10^{-2}$  & 6.41(44.2\%) &8.64(61.7\%) &13.8 (79.1\%)   \\
             & $10^{-4}$  & 6.41(44.2\%) &8.64(61.7\%) &13.8 (79.1\%)   \\
             & $10^{-6}$  & 6.42(44.2\%) &14.5(61.6\%) &28.0 (79.2\%)   \\
             & $10^{-8}$  & 6.49(44.2\%) &15.6(61.6\%) &34.9 (79.2\%)  \\
             & $10^{-10}$  & 6.51(44.2\%) &16.5(61.6\%) &38.0 (79.2\%)   \\\hline
EDA          &--           &288.6(62.3\%) &277.7(73.5\%) &286.5 (81.4\%)   \\
LDA$+$PCA    &--          &0.56(42.3\%) &1.17(39.7\%) & 2.54 (57.5\%) \\
 \end{tabular*}
 {\rule{\temptablewidth}{1pt}}\\
 \end{center}
 \begin{flushleft}
  {\small {\rm {\bf Table 4.4, Example 4.3 }:~CPU time in seconds and recognition accuracy (in brackets) of the Arnoldi-EDA and Lanczos-EDA algorithms (with different tolerances for exponential eigenproblems) on the FERET database, $d=80\times 80,~k=200$.}}
 \end{flushleft}
 \end{table}
}

{\bf Example 4.4}~~In this example, we demonstrate that {Arnoldi-EDA} and {Lanczos-EDA} are suitable to data sets with high dimension.
There are two test sets in this example.
The ORL database contains 400 face images of $k=40$
individuals, and the image size is $92\times 112$. The major challenge on this data set is the
variation of the face pose. There is no lighting variation with
minimal facial expression variations and no occlusion. Figure 4.7 gives the sample of ORL database images of three individuals. A random subset with $\ell=2,3,5$ images per subject was
taken to form the training set, and the rest of the images were
used as the testing set.

The CMU-PIE database contains more than 40,000 images of 68 subjects with more than 500 images for each. These face images are
captured by 13 synchronized cameras and 21 flashes under varying pose, illumination, expression and lights. In this experiment, we choose $k=10$ subjects and 10 images under different illuminations, lights, expressions and poses for each subject. Thus, the total number of images chosen from CMU-PIE database is 100. The image size is $d=486\times 640$ pixels, so we have to deal with eigenproblems of size $311,040\times 311,040$. We randomly select
with $\ell=3,5,8$ images for each subject to organize a training set, while the testing set consists of the remaining images. Figure 4.8 depicts the sample of CUM-PIE database images of three individuals.

In this example,
we compare the inexact Krylov subspace algorithms {Arnoldi-EDA} and {Lanczos-EDA}
with some state-of-the-art algorithms including EDA\cite{ZF}, PCA \cite{PCA}, LDA$+$PCA (Fisherfaces) \cite{BHK},
NLDA (the null space LDA method) \cite{CLKLY}, LDA/QR \cite{YL}, LDA/GSVD \cite{HJP,YJPP}, RLDA \cite{JHF} (the regularized LDA method, where the regularized parameter is chosen as 0.01) for the face recognition problem. We reserve $99\%$ energy for recognition for PCA.
For LDA$+$PCA, we also reserve $99\%$ energy in the PCA stage, followed by LDA.
Tables 4.5 and 4.6 list the numerical results. In Figures 4.9 and 4.10, we plot curves of recognition accuracy with respect to the projected dimension on the ORL and CUM-PIE databases, with $\ell=5$ and 3, respectively.

\begin{figure}
{\small {\bf Figure 4.7,~Example 4.4}:~Sample face images of 3 individuals from
the ORL database, $d=92\times 112$.}
\end{figure}

\begin{figure}
{\small {\bf Figure 4.8,~Example 4.4}:~Sample face images of 3 individuals from
the CMU-PIE database, $d=640\times 486$.}
\end{figure}

It is observed from Table 4.5 and Figure 4.9 that the recognition accuracies of the four EDA-based algorithms are higher than those of many state-of-the-art algorithms such as LDA+PCA, PCA, LDA/QR and LDA/GSVD.
This coincides with the numerical results reported in \cite{DB,WCP,Wang,YPan,ZF}. We notice that the recognition accuracies of RLDA and NLDA are comparable to
those of the four EDA-based methods, while {Arnoldi-EDA} and {Lanczos-EDA} run much faster than RLDA, NLDA and LDA/GSVD.

Moreover, it is seen from Table 4.6 and Figure 4.10 that for very large data sets such as CMU-PIE, all the algorithms EDA-eigs, EDA, RLDA, NLDA and LDA/GSVD fail to work, due to their heavy storage requirements.
As a comparison, {Arnoldi-EDA} and {Lanczos-EDA} perform quite well in all the situations, and their recognition accuracies are a little higher than those of PCA,
LDA+PCA and LDA/QR. Consequently, our new algorithms are suitable to data sets with high dimension, which are competitive alternatives to EDA for dimensionality reduction.

\begin{figure}
\begin{center}
\scalebox{0.5}{\includegraphics{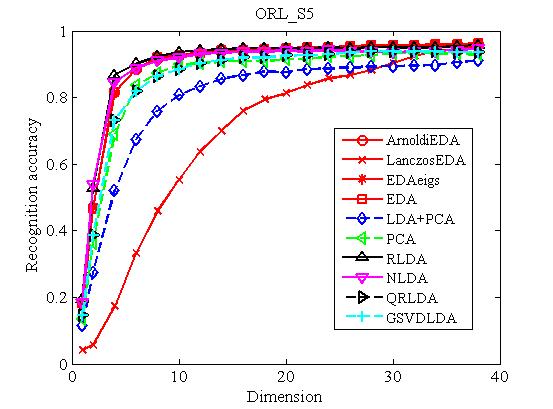}}
\end{center}
 {\small {\bf Figure 4.9, Example 4.4}:~Recognition accuracy with respect to projected dimension on the ORL database, 5 Train.}
\end{figure}

\begin{figure}
\begin{center}
\scalebox{0.5}{\includegraphics{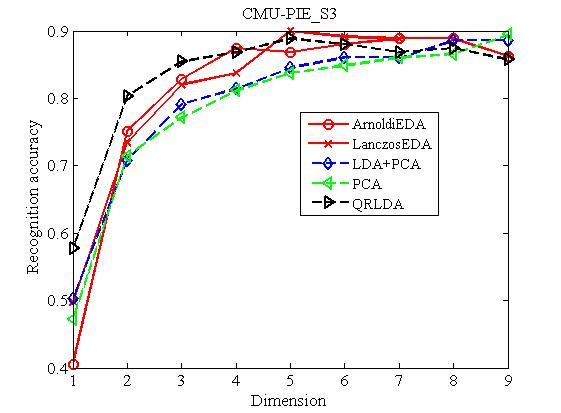}}
\end{center}
 {\small {\bf Figure 4.10, Example 4.4}:~Recognition accuracy with respect to projected dimension on the CMU-PIE database, 3 Train.}
\end{figure}

%

{\small
\begin{table}[!h]
\begin{center}
\def\temptablewidth{0.8\textwidth}
{\rule{\temptablewidth}{0.8pt}}
\begin{tabular*}{\temptablewidth}{@{\extracolsep{\fill}}lccccr}
{\bf Algorithm}  &{\bf 2 Train} &{\bf 3 Train} & {\bf 5 Train}    \\\hline
Arnoldi-EDA & 0.36(84.3\%) &0.51(87.8\%) &0.72 (96.4\%)   \\
Lanczos-EDA & 0.41(83.1\%) &0.63(86.9\%) &0.98 (95.7\%)   \\
EDA-eigs    & 407.9(84.3\%) &418.2(87.8\%) &426.0 (96.4\%)   \\
EDA     &1079.9(84.3\%) &1090.9(87.8\%) &1098.9 (96.4\%)   \\\hline
LDA+PCA   &0.05(83.2\%) &0.08(86.7\%) & 0.17 (91.6\%) \\
PCA         &0.03(78.4\%) &0.06(83.4\%) & 0.12 (93.5\%)   \\
RLDA        &1142.3(84.3\%)  &1165.7(88.3\%) &1159.0 (95.8\%)   \\
NLDA        &12.0(83.8\%) &17.4(87.7\%) & 16.5 (94.8\%)   \\
LDA/QR      &0.09(80.2\%) &0.12(85.0\%) & 0.17 (94.0\%)   \\
LDA/GSVD    &10.9(82.3\%) &16.3(85.9\%) & 15.5 (93.6\%)   \\
 \end{tabular*}
 {\rule{\temptablewidth}{1pt}}\\
 \end{center}
 \begin{flushleft}
  {\small {\rm {\bf Table 4.5, Example 4.4}:~CPU time in seconds and recognition accuracy (in brackets) of the 10 algorithms on the ORL database, $d=92\times 112 ,~k=40$. }}
 \end{flushleft}
 \end{table}
}

{\small
\begin{table}[!h]
\begin{center}
\def\temptablewidth{0.7\textwidth}
{\rule{\temptablewidth}{0.8pt}}
\begin{tabular*}{\temptablewidth}{@{\extracolsep{\fill}}lccccr}
{\bf Algorithm}  &{\bf 3 Train} &{\bf 5 Train} & {\bf 8 Train}    \\\hline
Arnoldi-EDA & 1.98(88.9\%) &3.44(94.4\%) &5.81(92.0\%)   \\
Lanczos-EDA & 2.46(90.0\%) &4.17(92.4\%) &6.93(91.0\%)   \\
EDA-eigs    &-- &-- &--   \\
EDA      & -- &-- &--    \\\hline
LDA+PCA   &0.36(88.6\%) &0.74(94.0\%) & 1.15(92.0\%) \\
PCA         &0.31(82.3\%) &0.67(88.4\%) & 1.06(89.0\%)   \\
RLDA         &-- &-- &--   \\
NLDA        &-- &-- &--   \\
LDA/QR      &0.47(88.9\%) &0.64(93.6\%) & 0.89(92.0\%)   \\
LDA/GSVD     &-- &-- &--   \\
 \end{tabular*}
 {\rule{\temptablewidth}{1pt}}\\
 \end{center}
 \begin{flushleft}
  {\small {\rm {\bf Table 4.6, Example 4.4}:~CPU time in seconds and recognition accuracy (in brackets) of the 10 algorithms on the CMU-PIE database, $d=640\times 486,~k=10$. Here ``--" means the algorithm suffers from ``Out of Memory".}}
 \end{flushleft}
 \end{table}
}


\section{Conclusions and future work}
The computation of large scale matrix exponential eigenproblem is the bottleneck in the frame work of EDA for high
dimensionality reduction \cite{Ah,DB,WCP,Wang,YPan,ZF}.
In this paper, we propose two inexact Krylov subspace algorithms, i.e., the inexact Arnoldi algorithm and the inexact Lanczos algorithm for nonsymmetric and symmetric matrix exponential eigenproblems, respectively.
Our main contribution is to investigate computing matrix exponential-vector products efficiently, and to solve the matrix exponential eigenproblems ``approximately".
The relationship between the accuracy of the approximate eigenvectors and the distance to the nearest neighbour classifier (NN) is revealed. A theoretical comparison of the LDA criterion and the EDA criterion is also given.
Experimental results on popular
databases for face recognition have validated the effectiveness of the proposed inexact Krylov subspace algorithms.

We point out that our new strategies also apply to other exponential based methods such as the exponential Laplacian embedding, exponential LDE, exponential LPP, exponential discriminant regularization, and 2DEDA for high dimensionality reduction other than face recognition \cite{Ah,DB,WCP,Wang,YPan,ZF}.
Furthermore, how to combine our new strategies with other eigen-solvers deserves further investigation.
For instance, the Jacobi-Davidson method \cite{JD} is a
successful eigenvalue solver for large eigenproblems. This method is applicable to the matrix exponential eigenproblems (\ref{26}) and (\ref{3.1}).
Furthermore, the bound {\rm(}\ref{eqn324}{\rm)} only relates the perturbation of the projections to the perturbation of the projected points for the nearest neighbor {\rm(}NN{\rm)}. It is interesting to establish a performance bound for general KNN which should also involve the distances between points of different classes.

\end{document}